\def\@tocline#1#2#3#4#5#6#7{\relax
\ifnum #1>\c@tocdepth % then omitf
  \else 
    \par \addpenalty\@secpenalty\addvspace{#2}% 
\begingroup \hyphenpenalty\@M
    \@ifempty{#4}{%
      \@tempdima\csname r@tocindent\number#1\endcsname\relax
 }{%
   \@tempdima#4\relax
 }%
 \parindent\z@ \leftskip#3\relax \advance\leftskip\@tempdima\relax
 \rightskip\@pnumwidth plus4em \parfillskip-\@pnumwidth
 #5\leavevmode\hskip-\@tempdima #6\nobreak\relax
 \ifnum#1<0\hfill\else\dotfill\fi\hbox to\@pnumwidth{\@tocpagenum{#7}}\par
 \nobreak
 \endgroup
  \fi}
\let\oldtocsection=\tocsection
\let\oldtocsubsection=\tocsubsection
\let\oldtocsubsubsection=\tocsubsubsection
\renewcommand{\tocsection}[2]{\hspace{0em}\oldtocsection{#1}{#2}}
\renewcommand{\tocsubsection}[2]{\hspace{1em}\oldtocsubsection{#1}{#2}}
\renewcommand{\tocsubsubsection}[2]{\hspace{2em}\oldtocsubsubsection{#1}{#2}}
\definecolor{cerulean}{rgb}{0,.48,.65} 
\definecolor{magenta}{rgb}{.5,0,.5} 
\definecolor{dred}{rgb}{.5,0,0} 
\definecolor{green}{rgb}{0,.5,0} 
\definecolor{blue}{rgb}{0,0,1} 
\definecolor{black}{rgb}{0,0,0} 
\definecolor{dgreen}{rgb}{0,.3,0} 
\definecolor{vdred}{rgb}{.3,0,0} 
\definecolor{red}{rgb}{1,0,0} 
\definecolor{salmon}{rgb}{0.98,0.50,0.45} 
\definecolor{gray}{rgb}{.5,.5,.5} 
\definecolor{seagreen}{rgb}{0.13,0.70,0.67} 
\definecolor{chartreuse}{rgb}{0.40,0.80,0.00}
\definecolor{cornflower}{rgb}{0.39,0.58,0.93} 
\definecolor{gold}{rgb}{0.65,0.45,0.00}
\theoremstyle{plain}
\newtheorem{theorem}{Theorem}
\newtheorem{thm}{Theorem}[section]
\newtheorem{lemma}[thm]{Lemma}
\newtheorem{cor}[thm]{Corollary}
\newtheorem{prop}[thm]{Proposition}
\newtheorem*{conjproblem*}{The conjugacy problem}
\newtheorem*{0twistedproblem*}{The 0-twisted-conjugacy problem}
\newtheorem*{Htwistedproblem*}{The H-twisted conjugacy problem}
\newtheorem*{Itwistedproblem*}{The I-twisted conjugacy problem}
\theoremstyle{definition}
\newtheorem{remark}[thm]{Remark}
\newtheorem{Open questions}[thm]{Open questions}
\newtheorem{Open question}[thm]{Open question}
\newtheorem{Open problems}[thm]{Open problems}
\newtheorem{Open problem}[thm]{Open problem}
\def\Bbb{\mathbb}
\def\Z{\Bbb{Z}}
\def\N{\Bbb{N}}
\def\ni{\noindent}
\def\Dist{\hbox{\rm Dist}}
\def\Dehn{\hbox{\rm Dehn}}
\def\Dehn{\hbox{\rm Dehn}}
\def\CL{\hbox{\rm CL}}
\def\F+L{\hbox{$\textup{F}\!_+\textup{L}$}}
\def\ssm{\smallsetminus}
\DeclareMathOperator{\lcm}{lcm}
\newtheorem*{thmI}{Theorem $1'$}
\def\onto{{\kern3pt\to\kern-8pt\to\kern3pt}}
\def\<{\langle}
\def\>{\rangle}
\def\|{{\ |\ }}
\def\g{\gamma}
\def\G{\Gamma}
\newcommand{\set}[1]{\left\{#1\right\}}
\newcommand{\abs}[1]{\left|#1\right|}
\renewcommand{\ni}{\noindent}
\def\*{^{\star}}
\renewcommand{\o}[1]{\overline{#1}}
\begin{document}

\title[Fast growing conjugator length functions]{Groups with fast-growing \\ conjugator length functions} 
\author{M.\ R.\ Bridson and T.\ R.\ Riley}

\date{3 August 2026}

\begin{abstract}
\ni    We construct the first examples of finitely presented groups whose conjugator length function is exponential; these
are central extensions of groups of the form $F_m\rtimes F_2$.
Further, we use a fibre product construction to exhibit a family of finitely presented groups $\Gamma_k$
where, for each  $k$,
the conjugator length function of $\Gamma_k$ grows like functions lying in the $k$-th level of the Grzegorczyk hierarchy of primitive recursive functions.
 
  \smallskip
%\\
\ni \footnotesize{\textbf{2020 Mathematics Subject Classification:  20F65, 20F10}}  \\ 
\ni \footnotesize{\emph{Key words and phrases:} conjugacy problem,  conjugator length}
\end{abstract}

\thanks{We gratefully acknowledge the financial support of the Clay Mathematics Institute (MRB) and the Simons Foundation (TRR--Simons Collaboration Grant 318301) and the National Science Foundation (TRR, NSF GCR-2428489). ORCID: 0000-0002-0080-9059 (MRB),  0009-0004-3699-0322 (TRR)}

\maketitle

%\setcounter{tocdepth}{2}
%\tableofcontents 

\section{Introduction} \label{intro}

This is one of a series of articles in which we explore the geometry of the conjugacy problem in finitely presented groups
through the lens of conjugator length functions.  Our purpose in this article is to construct explicit groups where
these functions exhibit a range of rapid growth types.
 Given a group $\Gamma$ with a finite generating set $A$, we write $u \! \sim \!  v$ when words $u$ and $v$ in $A^{\pm 1}$ represent conjugate  elements  of $\Gamma$,  and   define $\CL(u,v)$ to be
 the length of a shortest word $w$ such that $uw=wv$ in $\Gamma$.
 The \emph{conjugator length function} $\CL : \N \to \N$ is defined so that $\CL\mleft(n\mright)$ is the least
 integer  $N$ such that $\CL(u,v) \leq N$ for all words $u$ and $v$ with lengths $|u|+|v|\le n$ such that
  $u \! \sim v$ in $\Gamma$.

We proved in \cite{BrRi2, BrRi1} that for all $d \in \N$, there are finitely presented groups for which   $\CL\mleft(n\mright) \simeq n^d$, and   in \cite{BrRi4} we proved that the set of exponents $e$ for which there is a finitely presented group with $\CL\mleft(n\mright) \simeq n^e$ is dense in $[2, \infty)$.  Many constructions
of finitely presented groups are known that have a solvable word problem but an unsolvable conjugacy problem (e.g.~\cite{miller1}), 
and the  conjugator  length function of such a group  is not bounded above by any recursive function.   But it has proved
surprisingly difficult to construct finitely presented groups with matching upper and lower bounds that are large. 

The bulk of this article is dedicated to constructing the  first examples of  finitely presented groups where the conjugator length function is exponential.
 
\begin{theorem} \label{t: exponential CL}
There exist finitely presented groups $\Lambda$ such that $\CL\mleft(n\mright) \simeq 2^n$,  for instance, 
\begin{align*} 
\Lambda  & = \ \scaleleftright[1.75ex]{\bigg\langle}{     \, a_1, a_2, a_3, s, t,  \lambda \  \left| \ \parbox{69mm}{$s^{-1} a_1 s = a_{2}, \ s^{-1} a_2 s = a_{3}, \  s^{-1} a_3 s = a_1 a_2 a_3,$ \\  $[t,a_1] =  [t,a_2]  = [t,a_3] = \lambda,$ \\  $[a_1,\lambda] = [a_2,\lambda] = [a_3,\lambda] = [s,\lambda] = [t,\lambda] = 1$} \, \right. }{\bigg\rangle}.
\end{align*} 
 \end{theorem}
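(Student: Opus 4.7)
The proof of Theorem~\ref{t: exponential CL} splits into an upper bound $\CL_\Lambda(n) \leq 2^{O(n)}$ and a matching lower bound $\CL_\Lambda(n) \geq 2^{\Omega(n)}$.

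For the upper bound, I would exploit the central extension
\begin{equation*}
1 \to \langle \lambda \rangle \to \Lambda \to G \to 1,
\end{equation*}
where $G = F_3 \rtimes F_2$, with $F_2 = \langle s, t\rangle$ acting on $F_3 = \langle a_1, a_2, a_3\rangle$ so that $t$ is trivial and $s$ acts by the tribonacci automorphism $\phi$, whose Perron eigenvalue is $\rho \approx 1.839$.  Given conjugate $u, v \in \Lambda$ of total length $n$, first obtain a $G$-conjugator $\bar w$ of $\bar u$ to $\bar v$ whose length is polynomial in $n$ using control over $\CL_G$ (since $G$ is a semi-direct product of free groups with a well-understood automorphism).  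Any lift $\tilde w \in \Lambda$ satisfies $\tilde w u \tilde w^{-1} = v \lambda^M$ for some $M$ with $|M| \leq 2^{O(n)}$.  The error $\lambda^M$ is absorbed by multiplying $\tilde w$ by a centralizer element $c \in C_\Lambda(u)$ whose twist $f(c) = -M$ under the natural homomorphism $f \colon \pi^{-1}(C_G(\bar u)) \to \mathbb Z$, and the exponential distortion of $\langle\lambda\rangle$ in $\Lambda$ (encoded by the identity $[t, \phi^k(a_1)] = \lambda^{e_k}$ where $e_k \sim \rho^k$) allows $c$ to be chosen of length $O(\log |M|) = O(n)$.  The conjugator $\tilde w c$ then has length $2^{O(n)}$.

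For the lower bound, the plan is to exhibit a family of conjugate pairs $(u_k, v_k) \in \Lambda$ of total length $O(k)$ for which the shortest conjugator has length $\Omega(\rho^k)$.  The candidate pair would leverage the tribonacci dynamics: one would choose $u_k$ whose $G$-centralizer $C_G(\bar u_k)$ has a twist map $f_{u_k}$ whose nonzero image values come in "exponentially spaced" quanta (tied to $e_k$ and its $\phi$-iterates), together with $v_k = u_k \cdot \lambda^{N_k}$ for a specific $N_k$ that forces any centralizer element realizing the required twist to have $\Lambda$-length $\Omega(\rho^k)$.  Candidates include pairs whose $G$-images are supported where the centralizer twist cannot be shortcut through the standard $s^{\pm k}$-compression of $F_3$-elements.

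The main obstacle is the lower bound: ruling out all possible compressions of a conjugator.  The challenge is that $\Lambda$ admits seemingly independent shortcuts, conjugating long $F_3$-words by $s^{\pm k}$ to replace $\phi^k(a_i)$ by $s^{-k} a_i s^k$, and using the commutator identity above to compress central elements.  The proof must show that, for the chosen pair, every putative conjugator $w$ projects to a specific coset of $C_G(\bar u_k)$ whose shortest representative in the $\Lambda$-word metric is forced to be $\Omega(\rho^k)$ even after all such compressions are exhausted.  I expect this to reduce to a lattice-point estimate: the minimal $\ell^1$-norm of an integer solution to a Diophantine equation $\sum_i n_i E_{k, i} = N_k$, where $E_{k, i}$ are the column sums of the tribonacci matrix $M^k$, combined with a careful accounting of how $s$-conjugation in $\Lambda$ changes the effective equation without ever reducing the cost below $\Omega(\rho^k)$.
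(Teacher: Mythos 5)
Your overall framework matches the paper's: work with the central extension $1 \to \langle\lambda\rangle \to \Lambda \to G \to 1$, find a $G$-conjugator, and correct the resulting central error by elements related to the centraliser $Z_G(\bar u)$. However, your upper bound contains a genuine gap.

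The claim that the correcting element $c$ can always be chosen of length $O(\log|M|) = O(n)$ is false, and in fact must be false, because if it were true your argument (polynomial $\tilde w$ times polynomial $c$) would prove a polynomial bound on $\CL_\Lambda$, contradicting the exponential lower bound you need in the other half. The error is a conflation: the exponential distortion of $\langle\lambda\rangle$ in $\Lambda$ means that $\lambda^M$ is represented by \emph{some} word of length $O(\log|M|)$, e.g.\ $[t,s^{-k}a_1s^k]$, but such words are central and therefore useless for correcting the conjugator --- they do not change $\tilde w u \tilde w^{-1}$ at all. What you need is an element $c$ projecting into $Z_G(\bar u)$ whose conjugation action twists $u$ by $\lambda^{-M}$, and those elements live in a \emph{constrained} subset of $\Lambda$ whose geometry depends delicately on the structure of $Z_G(\bar u)$. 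In the case $Z_G(\bar u) = \langle u'_F\rangle\times E_0$ the achievable twists form an ideal $(L_0,\ldots,L_{m-1})\le\Z$ where the $L_i$ are themselves of size roughly $\rho^i|u_F|$, and a B\'ezout-type estimate bounds the coefficients $n_i$ only by $\max\{|L_0|,\ldots,|L_{m-1}|,|M|\}$, not by $\log|M|$; this is where the exponential bound actually comes from. The paper handles this through a five-case classification of $Z_G(u)$ (Lemma~\ref{l: Centralizers in hyp case}) and tailored B\'ezout quantifications (Section~\ref{s: Bezout}), producing a bound $\preceq 2^n$ but not better.

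For the lower bound your plan is in the right direction --- take $v_k = u_k\lambda^{N_k}$ for suitable $N_k$ --- but you stop short of a construction and worry about compressions (via $s^{\pm k}$-conjugation) that in the paper's actual choice cannot arise. The paper takes $\bar u_n = a_1 t\,\lambda^{f(n)}$ and $\bar v_n = a_1 t$, where $f(n)$ is the exponent sum of $\phi^n(a_1)$. The key observation is that $Z_G(a_1 t) = \langle a_1\rangle\times\langle t\rangle$ contains no letters $s$ at all, so the compression shortcut you are worried about is simply unavailable inside the relevant coset. Every $\Lambda$-conjugator has $G$-image $a_1^pt^q$ with the constraint $q = p+f(n)$, and then two retractions (onto $\langle t\rangle\cong\Z$ and onto $H$, in which $\langle a_1\rangle$ is undistorted) give the length bound: if $|p|<f(n)/2$ use the first, otherwise use the second. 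No lattice-point analysis is needed. I'd encourage you to look for candidate pairs whose $G$-centraliser is as small and rigid as possible, rather than ones where the centraliser's twist has ``exponentially spaced quanta'' --- the rigidity is what makes the lower bound clean.
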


We also construct  examples where the growth of the conjugator length function is
 comparable to the fast-growing functions $A_k$, which constitute the Ackermann function $(k,n) \mapsto A_k(n)$ and represent the successive levels of the Grzegorczyk hierarchy of primitive recursive functions: 
$A_0\mleft(n\mright)= n+2,\ A_1\mleft(0\mright)=0,\ A_k\mleft(0\mright)=2$ for $k\ge 2$, and for all $k,n\ge 0$,
$$
A_{k+1}\mleft(n + 1\mright) = A_k\mleft(A_{k+1}\mleft(n\mright)\mright).  
$$ 
So $A_1\mleft(n\mright) = 2n$, $A_2\mleft(n\mright) = 2^n$, and $A_3\mleft(n\mright)$ is a height-$n$ tower of powers of $2$.  (See, for example, \cite{rose}.)

\begin{theorem} \label{t:big fellas}
For all $k$, there is a finitely presentable group $\G$ whose conjugator length function satisfies $A_k\mleft(n\mright) \preceq \CL_\Gamma\mleft(n\mright) \preceq A_k\mleft(n^2\mright)$.  Thus %when $k \geq 3$, 
$\CL_\G\mleft(n\mright)$ is sandwiched  between a pair of fast-growing functions that are both in the $k$-th level of the Grzegorczyk hierarchy  that grades the primitive recursive functions.
\end{theorem}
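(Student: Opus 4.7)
My plan is to scale up the construction of $\Lambda$ from Theorem \ref{t: exponential CL}. In that group, the conjugator length function is driven by the exponential distortion of the free subgroup $\langle a_1, a_2, a_3 \rangle$ inside the mapping torus $F_3 \rtimes_s \mathbb{Z}$, which is then picked up by the commutators $[t, a_i] = \lambda$ in the central extension. To get $\CL_\Gamma$ of order $A_k$, I would replace that mapping torus by a finitely presented group $P_k$ containing a finitely generated subgroup $H_k$ with distortion function equivalent to $A_k$, and then apply the same central-extension recipe.

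The fibre product construction flagged in the abstract supplies $P_k$ and $H_k$. Starting from a finitely presented group $Q_k$ of type $F_3$ whose Dehn function is $\simeq A_k$, which can be produced by the Sapir-Birget-Rips hierarchy theorem, I would take a finite presentation $Q_k = \langle X \mid R \rangle$ and form the asymmetric fibre product $P_k$ of two copies of the free group $F(X)$ over $Q_k$. The Baumslag-Bridson-Miller-Short $1$-$2$-$3$ theorem guarantees that $P_k$ is finitely presentable. Standard fibre-product arguments, in the spirit of Mihailova and Bridson-Grunewald, identify a finitely generated subgroup $H_k \leq P_k$ whose distortion in $P_k$ is equivalent to the Dehn function of $Q_k$: the distortion precisely reflects the length of the shortest van Kampen disc filling a word that represents the identity in $Q_k$.

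Armed with $P_k$ and $H_k$, I would build $\Gamma_k$ by the central extension recipe of Theorem \ref{t: exponential CL}: adjoin generators $t$ and $\lambda$, impose $[t, b] = \lambda$ for each of the finitely many generators $b$ of $H_k$, and declare $\lambda$ central. For the lower bound $\CL_{\Gamma_k}(n) \succeq A_k(n)$, I would exhibit words $w_n$ of length $n$ in the generators of $P_k$ that represent elements of $H_k$ of word length $\simeq A_k(n)$ in the generators of $H_k$. The commutator relations then give $[t, w_n] = \lambda^{m}$ with $m \simeq A_k(n)$, and the same argument used to push through Theorem \ref{t: exponential CL} forces any conjugator realising this identity to have length at least $A_k(n)$, since producing this power of $\lambda$ as a product of commutators $[t, b]$ requires a word of length $\succeq A_k(n)$ in the generators of $H_k$.

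The principal obstacle will be the upper bound $\CL_{\Gamma_k}(n) \preceq A_k(n^2)$. Given a conjugate pair in $\Gamma_k$ of combined length at most $n$, one must extract an efficient conjugator. The plan is to first project to $P_k$, solve the conjugacy problem there with a quantitative bound on the $P_k$-conjugator length controlled by the Dehn function of $P_k$, and then restore the central correction by filling an appropriate disc in $\Gamma_k$. The $n \mapsto n^2$ inflation captures the composition of these two costs. Verifying that the composite bound still lies within the $k$-th level of the Grzegorczyk hierarchy, and does not overflow into level $k+1$, is the essential technical challenge; it rests on the fact that $A_k(n^2)$ and $A_k(n)$ are $\simeq$-equivalent in Grzegorczyk's grading even though they differ enormously pointwise.
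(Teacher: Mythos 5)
Your proposal diverges from the paper in a way that introduces a fatal error at the very first step. You form the fibre product of two copies of the free group $F(X)$ over $Q_k$ and invoke the 1-2-3 Theorem to conclude it is finitely presentable. But the 1-2-3 Theorem requires the kernel of $F(X) \onto Q_k$ to be finitely \emph{generated}, and this kernel is the normal closure of the relators $R$, which is infinitely generated whenever $Q_k$ is not free. This is precisely why Mihailova's construction yields finitely generated but not finitely presented subgroups of $F \times F$. To get a finitely generated kernel one must interpose the Rips construction: take a torsion-free hyperbolic group $\G_k$ with an epimorphism $\G_k \onto Q_k$ whose kernel is 2-generated, and then form $P_k < \G_k \times \G_k$. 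That is what the paper does.

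Beyond that, you propose to build a central extension of $P_k$ in analogy with $\Lambda$ from Theorem~\ref{t: exponential CL} and then redo the centraliser/conjugator analysis. The paper does something much more economical: it uses $P_k$ itself as the group $\G$ of Theorem~\ref{t:big fellas} and cites a black-box relation from \cite{Bridson13}, namely $\Dehn_{Q}\mleft(n\mright) \preceq \CL_P\mleft(n\mright) \preceq \Dehn_Q\mleft(n^2\mright)$, which holds under the hypothesis that $Q$ has uniformly quasigeodesic cyclics. Verifying that hypothesis is where the specific choice of $Q_k$ matters: the paper takes the Dison--Riley hydra groups $Q_k = H_k \dot\ast_{L_k}$, with $H_k$ CAT$(0)$, so UQG follows from Lemma~\ref{l:dot00}. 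Your suggestion to use a Sapir--Birget--Rips group would leave you needing to establish UQG (and asphericity, needed for the algorithmic 1-2-3 machinery) for those groups, which is not addressed. Your central-extension route would also require generalizing the entire centraliser analysis of Sections~\ref{s:G}--\ref{s: CL in lambda} (which relies heavily on the free-by-free and hyperbolic structure of $G$ and $H$) to the far less tractable fibre products $P_k$; there is no indication this is feasible. Finally, the claim in your last paragraph that ``$A_k(n^2)$ and $A_k(n)$ are $\simeq$-equivalent'' is false — $\simeq$ only permits affine rescaling of the argument. The theorem is stated more carefully: $A_k(n)$ and $A_k(n^2)$ lie in the same level of the Grzegorczyk hierarchy for $k \geq 3$, but they are not $\simeq$-equivalent, and the conjugator length function is only sandwiched between them.
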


Here are outlines of our constructions.

\subsection*{Groups with exponential conjugator length functions} 
The ideas behind our construction apply to large classes of groups, but  
in order to minimize the many technical details and to avoid becoming overwhelmed by notation,  
we focus on a family of central extensions of free-by-free groups.
   
Let $\phi$ be an atoroidal\footnote{meaning that no power of $\phi$ leaves a non-trivial
conjugacy class invariant} automorphism of the rank-$m$ free group
$F = F(a_1, \ldots, a_m)$ and define
\begin{align*}
H & = \langle a_1, \ldots, a_m, s \mid s^{-1} a_i s = \phi(a_i)  \  \   \forall i \  \rangle \\ 
G &  =     \langle \, H, t  \mid \left[t,a_1\right]=  \cdots  = \left[t,a_m\right]=1   \rangle \\ 
\Lambda^\phi &  =     \langle \, H, t, \lambda  \mid \left[t,a_1\right]=  \cdots  = \left[t,a_m\right]=\lambda, \  \lambda \textup{ central }   \rangle .
\end{align*} 
Then $H = F\rtimes_\phi \Z$ is a torsion-free hyperbolic group (by \cite{BF,  Brinkmann}),
$G$ is the trivial HNN-extension $H \dot\ast_F$ over $F<H$
(which decomposes as a semidirect product $F_m\rtimes\<s,t\>$),  
and $\Lambda^\phi$ is a  central extension of $G$ with centre $\<\lambda\>\cong\Z$.  (To see that  $\lambda\in\Lambda^\phi$
has infinite order, note that $\Lambda^\phi = (H\times\<\lambda\>)\rtimes \<s,t\>$.)  We shall restrict our attention to 
automorphisms $\phi$ that have {\em homological stretch}, meaning that the action of $\phi$ on the abelianisation of 
$F$ has an eigenvalue of absolute value greater than $1$; equivalently,  there is a constant $c>1$
so that for some $a_i$ the sum of the exponents on the letters in $\phi^n(a_i)$ is bounded below by $c^{|n|}$ for all $n\in\Z$.
Such stretch always occurs when the atoroidal automorphism $\phi$ is  {\em positive}, i.e.~when each $\phi\mleft(a_i\mright)$ is a word in $a_1, \ldots, a_m$ with only  positive exponents.

\begin{thmI} \label{Thm1'} If $\phi\in{\rm{Aut}}(F)$ is an atoroidal automorphism with homological stretch, then $\CL_{\Lambda^\phi} \mleft(n\mright)    \simeq    2^n.$
\end{thmI}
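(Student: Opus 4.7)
The plan is to establish matching bounds $\CL_{\Lambda^\phi}\mleft(n\mright) \simeq 2^n$, exploiting the central-extension structure $1 \to \langle\lambda\rangle \to \Lambda^\phi \to G \to 1$.  Throughout let $c > 1$ be the homological stretch of $\phi$, so that $k_n := \sigma(\phi^n(a_1)) \sim c^n$ for some generator $a_1$, where $\sigma\colon F \to \Z$ is exponent-sum.  The identity $[t, w] = \lambda^{\sigma(w)}$ for every $w \in F$ (immediate from $[t, a_i] = \lambda$ and centrality of $\lambda$) is the key tool: it expresses the central element $\lambda^{k_n}$ as the short word $[t, s^{-n} a_1 s^n]$ of length $O(n)$.

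For the lower bound I would take the pair $u_n := a_1 t$ and $v_n := (a_1 t) \cdot [t, s^{-n} a_1 s^n]$, of lengths $O(1)$ and $O(n)$ respectively, which differ by the central $\lambda^{k_n}$ in $\Lambda^\phi$.  Atoroidality forces $C_G(\overline{u_n}) = \langle a_1 \rangle \times \langle t \rangle$ in the quotient $G = F \rtimes \langle s, t\rangle$: the $F$-component of any centralising element must centralize $a_1 \in F$, while its $\langle s, t\rangle$-component must both commute with $t$ in $F_2$ and act trivially on $a_1$ via $\phi^{\sigma_s(\cdot)}$, which by atoroidality forces zero $s$-exponent.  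Hence every conjugator $w$ of $u_n$ to $v_n$ projects to some $\bar w = a_1^j t^k$ in $G$, and a direct computation in $\Lambda^\phi$ using $t a_1 = \lambda a_1 t$ yields $w^{-1} u_n w = \lambda^{j-k}\, u_n$, forcing $j - k = k_n$.  To convert this into a length bound, I would project $\Lambda^\phi \to H \times \Z$ by sending $\lambda \mapsto 0$ and $t$ to the $\Z$-factor (well-defined since $t$ and the $a_i$ commute in $H \times \Z$ and $\lambda$ maps to $0$); this gives $|w| \geq |a_1^j|_H + |k|$.  Since $a_1$ has positive translation length $\tau$ in the hyperbolic group $H$, we have $|a_1^j|_H \geq \tau |j|$, so $|w| \geq \min(\tau, 1)\, |j - k| = \Omega(c^n) \simeq 2^n$.

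For the upper bound, given a conjugate pair $u, v$ with $|u| + |v| \leq n$, I would proceed in three steps: (i) produce a conjugator $\bar w \in G$ of length $O(n)$, using that $G = H \dot\ast_F$ is a simple HNN-extension of the hyperbolic group $H$ and that such extensions admit linear conjugator length; (ii) lift $\bar w$ to $w \in \Lambda^\phi$ by reading the same word in the generators, obtaining $w^{-1} u w = v \lambda^{\delta}$ where the discrepancy $\delta \in \Z$ is the value of the central-extension cocycle on the trivial-in-$G$ word $w^{-1} u w v^{-1}$; a van Kampen / corridor argument in $G$, using the relations $[t, a_i] = \lambda$, bounds $|\delta|$ by the exponential Dehn-type area, giving $|\delta| \leq 2^{O(n)}$; (iii) correct by composing $w$ with a lifted centralizer element $c$ realising $[u, c] = \lambda^{-\delta}$, chosen of length $O(|\delta|) = O(2^n)$.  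The product $wc$ is then a conjugator of total length $O(2^n)$.

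The hardest step is (ii): bounding the cocycle discrepancy $\delta$ by $2^{O(n)}$ rather than by a super-exponential function requires a careful van Kampen / corridor analysis in $G$, and step (iii) requires separately verifying that for any conjugate $u, v$ of bounded length the commutator map from the lifted centralizer of $\bar u$ surjects onto the required subgroup of $\langle \lambda \rangle$ within the length budget.  The lower bound, by contrast, reduces to a clean commutator calculation once the right pair $(u_n, v_n)$ is identified; the choice $u_n = a_1 t$ (rather than simpler candidates like $a_1$ or $t$ alone) is forced by the twin requirements of a small, tractable $G$-centralizer (via atoroidality) and a surjective commutator map from lifted centralizer elements onto $\langle \lambda \rangle$.
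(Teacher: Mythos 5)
Your lower bound is essentially the paper's argument: the same test pair $a_1t$ and $a_1t\,\lambda^{f(n)}$, the same identification of the $G$-centraliser as $\langle a_1\rangle\times\langle t\rangle$, and the same projection of any candidate conjugator $a_1^j t^k$ onto $H$ and onto $\langle t\rangle$ to derive $|w|\succeq |j-k|=f(n)\simeq 2^n$. The only cosmetic difference is the roles of $u_n$, $v_n$. Nothing to add there.

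For the upper bound your three-step skeleton (find a $G$-conjugator, lift, correct the central discrepancy by a lifted centraliser element) is the same template the paper uses, but as written it has three genuine gaps, and you have flagged only the third.

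\textbf{(a) The claim of linear $\CL_G$ is unsubstantiated.} You invoke the assertion that a trivial HNN extension $H\,\dot\ast_F$ of a hyperbolic group has linear conjugator length. There is no such general theorem (conjugacy in HNN extensions is notoriously delicate, even with hyperbolic vertex group), and the paper does not prove it for this $G$. What the paper proves (Lemma~\ref{lem:best w} and the remark preceding it) is that $\CL_G(u,v)\leq C(|u|+|v|)^2$, with a linear bound only in the cases \eqref{l hyp:0}--\eqref{l hyp:b2} of Lemma~\ref{l: Centralizers in hyp case} and a quadratic bound in case~\eqref{l hyp:c} (where $Z_G(u)$ is cyclic, generated by an element $z=f(u'_E)^q$ of length $O(n^2)$).

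\textbf{(b) Your step (ii) is not safe once (a) fails.} If the $G$-conjugator only has length $O(n^2)$, then bounding the discrepancy by the distortion of $\langle\lambda\rangle$ applied to $|w^{-1}uwv^{-1}|$ gives only $|\delta|\leq 2^{O(n^2)}$, which would yield $\CL_\Lambda(n)\preceq 2^{n^2}$, the wrong exponent. The paper avoids this by refining the distortion estimate: Lemma~\ref{lem: dist} bounds $N$ by $\alpha\,C^{\Sigma(W)}$, where $\Sigma$ tracks the maximum $|\sigma|$ of a prefix rather than the whole word length. Lemma~\ref{lem:best w} then produces a conjugator with $|w|_G=O(n^2)$ but $\Sigma(w)=O(n)$, and it is this $\Sigma$-control, not the length of $w$, that yields $|\delta|\leq 2^{O(n)}$ in case~\eqref{l hyp:c}. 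Your ``van Kampen / corridor'' bound would have to re-derive exactly this refinement to land on the right exponent.

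\textbf{(c) Step (iii) is where almost all the technical content lives.} You defer the verification that a short $c$ realising $[v,c]=\lambda^{-\delta}$ exists, but this is precisely what Sections~\ref{s: Bezout}, \ref{s: centraliser and conjugators in G} and \ref{s: CL in lambda} of the paper are for: one must classify $Z_G$ into five cases (Lemma~\ref{l: Centralizers in hyp case}), write the realisability condition as a linear Diophantine equation in the exponents of the centraliser generators, and use a quantified B\'ezout to find a small integer solution (Corollaries~\ref{cor Bezout lcm} and~\ref{cor Bezout multi}). In the case $Z_G(u)=\langle u'_F\rangle\times E_0$ there is an additional subtlety --- $E_0$ is infinitely generated --- and the paper needs a Cayley--Hamilton argument to cut the generating set of the relevant ideal of $\Z$ down to $m$ elements. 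None of this is a routine verification; it is the substance of the proof.

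So: same route as the paper, lower bound essentially complete, upper bound a correct-looking outline resting on an unproven claim in (a), a bound in (b) that would be off by a square in the exponent without the paper's $\Sigma$-refinement, and with the central quantitative work of (c) left undone.
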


An example of a positive atoroidal automorphism   (from \cite[Example 3.2]{GS91}) is 
\begin{equation} \label{eq:phi}
\phi(a_i) = \begin{cases} a_{i+1} & \text{for } i=1, \ldots, m-1 \\    a_1 \cdots a_m & \text{for } i=m \end{cases}
\end{equation}  
where $m \geq 3$.   The group given in Theorem~\ref{t: exponential CL} is the $m=3$ instance of this.

The proof of Theorem~$1'$ occupies Sections~\ref{s: Bezout}--\ref{s: CL Lambda at least exp}.
Throughout the proof,   we will retain the notation  $F$,  $H$, and $G$, for the groups defined above.
We will normally abbreviate $\Lambda^\phi$ to $\Lambda$. 
 
If $a$ and $b$ are integers with greatest common divisor $d$,  then B\'ezout's Lemma provides integer solutions $(x,y)$
to the equation  $ax  + by = d$.
In Section~\ref{s: Bezout} we will quantify certain variations of this situation that are needed in our proof of the
upper bound $\CL_{\Lambda^\phi} \mleft(n\mright)    \preceq    2^n$.
  Such quantifications are a recurring theme in our papers on conjugator length, where  finding a short conjugator for a pair of group elements known to be conjugate hinges on finding a \emph{small} solution to a system of
  linear Diophantine equations that is known to have \emph{some} solution.

In Section~\ref{s:G} we discuss the structure of $G$ in relation to its hyperbolic subgroup $H$ and its free subgroups $F$ and $E = \langle s, t \rangle$ and we establish notation for the arguments that follow.  
In Section~\ref{s: distortion} we prove that the distortion of the central subgroup 
$\langle \lambda \rangle<\Lambda$ is exponential  $\Dist_{\langle \lambda \rangle}^{\Lambda}(n) \simeq 2^n$. The  upper
bound holds for quite general reasons but the lower bound is specific to the structure of $\Lambda$; it relies on the assumption
that $\phi$ has homological stretch.

In Section~\ref{s: conjugators in H and E} we discuss the length of  conjugators in the hyperbolic groups $H$ and $E$.
Centralisers of non-trivial elements in torsion-free hyperbolic groups are generated by maximal roots of those elements and, with the help of the \emph{uniformly monotone cyclics property} enjoyed by torsion-free hyperbolic groups, we will estimate the lengths of those roots.  In Section~\ref{s: centraliser and conjugators in G}  we will promote our understanding of element-centralisers in $H$ and $E$ to a  classification of element-centralisers in $G$.  The need for this classification arises naturally in our analysis of
 conjugators in $G$ because if $u \sim v$ in $G$ and one has hold of a specific $w_0$ 
 such that $uw_0=w_0v$ in $G$,  then $Z_G(u)w_0$ is the set of \emph{all} such conjugators. 
 From this analysis we will identify conjugators that are optimal in an appropriate sense.   A key result
 here is Lemma~\ref{lem:best w},  which provides bounds on conjugator lengths in $G$ that
 play a pivotal role in Section~\ref{s: CL in lambda}.

With this  understanding of element-centralisers in $G$ in hand,
in Section~\ref{s: CL in lambda} we complete the proof of the upper bound  $\CL_{\Lambda}\mleft(n\mright)   \preceq 2^n$ 
in Theorem~$1'$.  In outline, the argument proceeds as follows. 
Given words $\o{u}$ and $\o{v}$ representing conjugate elements in $\Lambda$,
we consider  the images  $u,v\in G$  of $\o{u}$ and $\o{v}$ and search through all 
words $W$ such that $uW =Wv$ in $G$---at least one will satisfy $\o{u}W = W\o{v}$ in $\Lambda$.  We will be able to access all the $W$ such that $uW =Wv$ in $G$ according to our classification of element-centralisers in $G$ from Section~\ref{s: centraliser and conjugators in G}, and we can tell which 
   are  conjugators for $\o{u}$ and $\o{v}$ in $\Lambda$ by calculating whether the  integer $N$ such that $\o{u}W = W\o{v} \lambda^N$ in $\Lambda$ is zero.    In some instances there is only one possible $W$.  In other cases, we will argue that one of the quantifications of B\'ezout from Section~\ref{s: Bezout} will guarantee that a sufficiently short $W$ exists among the possibilities. In every case, our estimates will give that there exists a conjugator of length within the $\preceq 2^n$  bound.   
  
  Finally, in Section~\ref{s: CL Lambda at least exp}  we complete the proof of Theorem~$1'$
  by exhibiting a family of  pairs of  words that are conjugate in $\Lambda$ but only via long conjugators, thus  establishing the lower bound $\CL_{\Lambda}\mleft(n\mright)   \succeq 2^n$.

 \subsection*{Fibre products and groups with fast growing conjugator length functions.}
  Our examples establishing Theorem~\ref{t:big fellas} are explained in Section~\ref{s:Fibre products}.  They come from applying a fibre-product construction from \cite{Bridson14} to the Hydra groups of Dison and Riley \cite{DR} -- these
  are groups $\Gamma_k$ with concise finite  presentations that have $A_k$ as their Dehn functions.  As we shall explain, these presentations are  aspherical, which is a  feature that is needed to derive
  a finite presentation for the fibre product $P$ that we consider.  The difficulty of the 
   conjugacy problem in $P$, as witnessed by its conjugator length function, 
   reflects the difficulty of  the word problem of the seed group $\Gamma$, as witnessed by its Dehn function.  The current state of knowledge does not make this relationship precise  enough to determine the conjugator length function of $P$ exactly in general.  However,  we can establish that $A_k\mleft(n\mright) \preceq \CL\mleft(n\mright) \preceq A_k(n^2)$ and this places $\CL\mleft(n\mright)$ between a pair of fast-growing representatives of the same level of the Grzegorczyk hierarchy  per Theorem~\ref{t:big fellas}, because for $k \geq 3$, if a function is in the same level as $A_k$, then precomposing that function with a polynomial does not change where in the hierarchy  that function lies.

We conclude  (in Remark~\ref{rem:frontier}) with comments on how a study of a variant of the 
Dehn function introduced in \cite{Bridson13} might lead to explicit examples of finitely presented groups realizing  a wider variety of conjugator length functions.\footnote{Update:\ using a different approach, Gillis and Wagner \cite{GiWa} recently proved definitive results describing which functions can arise.}

\subsection*{Acknowledgement}    This paper developed out of a long-running project on which Andrew~Sale  also worked.  We are grateful  to him for his insights and his companionship.    We also thank an anonymous referee for their comments.

 \section{Preliminaries} \label{preliminaries}

\subsection*{Words}  We write   $[x,y] :=x^{-1}y^{-1}xy$ and $x^y := y^{-1} x y$.   The length $|w|$ of a word $w= x_{i_1}^{\mu_1} \cdots x_{i_n}^{\mu_n}$ in an alphabet $A = \set{ a_1, \dots, a_n }^{\pm 1}$ is $|\mu_1| + \cdots + |\mu_n|$.  If $A$ is a generating set of a group $\G$ and $\g \in \G$, then $|\g|_\G$ denotes the length of a shortest word  representing $\g$.     

 \subsection*{Subgroup distortion}

For a  finitely generated subgroup $S$ of a finitely generated group $\G$,  
 the \emph{distortion function}  $\Dist^{\G}_{S} : \N \to \N$,  with respect to fixed finite generating sets,  is
$$\Dist^{\G}_{S}\mleft(n\mright) \ := \  \max \set{  \  \abs{\g}_S  \  \mid  \   \g \in {S} \textup{ with }  \abs{\g}_\G  \leq n   \ }.$$

\subsection*{Qualitative equivalence of growth rates}
We use the  relation $\simeq$ that is standard in geometric group theory: for  functions  $f,g: \mathbb{N} \to \mathbb{N}$ write $f \preceq g$ when there exists $C>0$ such that $f\mleft(n\mright) \leq Cg(  Cn+C ) +Cn +C$ for all $n \geq 0$, and  write $f  \simeq g$ when $f \preceq g$ and $g \preceq f$.  

For us, pertinent consequences of this definition include  (1) when $S \leq \G$, any choices of finite generating sets for $S$ and $\G$ give rise to equivalent subgroup distortion functions $\Dist^{\G}_{S}$; (2) the cost of cyclically permuting a word has no impact on the growth of the conjugator length function up to equivalence; and (3) any two finite generating sets for a group give rise to equivalent  conjugator length functions.

\section{Quantifications around  B\'ezout's Lemma} \label{s: Bezout}
 
The following elementary estimates pertaining to B\'ezout's Lemma play an important
role in our proof. 
Lemma~\ref{lemma Bezout} follows a similar account in our paper \cite{BrRi2}.  We add  Corollary~\ref{cor Bezout lcm} because it will lead to a bound on $|\sigma(w)|$ in Lemma~\ref{lem:best w} that is crucial in the final case of  our proof  that $\CL_{\Lambda}(n) \leq 2^n$ in Section~\ref{s: CL in lambda}; Corollary~\ref{cor Bezout multi} is also called upon multiple times in that proof.

\begin{lemma} \label{lemma Bezout}
Suppose $a$ and $b$ are non-zero integers and the linear Diophantine equation $ax+by=c$ has a solution $(x,y)$. Let $d = \textup{gcd}(a,b)$. Then $ax+by=c$ has a solution with  
$|x| \leq | \frac{b}{d}|$ and  $|y| \leq \max \set{|\frac{c}{b}|, |\frac{a}{d}|}$.  
\end{lemma}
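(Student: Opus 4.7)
The plan is to reduce to the coprime setting and then make a sign-sensitive choice within the one-parameter family of integer solutions. The existence of some integer solution forces $d \mid c$, so writing $a = d a'$, $b = d b'$, $c = d c'$ the equation becomes $a' x + b' y = c'$ with $\gcd(a', b') = 1$; and by possibly flipping the signs of $x$ and $y$---operations that preserve $|x|$ and $|y|$---I may assume $a', b' > 0$. The general solution is then parametrised as $(x_0 + b' t,\, y_0 - a' t)$ for $t \in \Z$, so the bounds $|x| \leq b'$ and $|y| \leq \max\{|c'|/b', a'\}$ are to be secured by choosing $t$ wisely.

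The key---and mildly subtle---point is that one should not simply minimise $|x|$: that strategy yields only $|y| \leq |c/b| + |a/d|$, which can exceed the advertised max. Instead I would take $x$ to be the unique representative of $\{x_0 + b' t : t \in \Z\}$ lying in $[0, b')$ when $c' \geq 0$, and in $(-b', 0]$ when $c' < 0$. Either way $|x| < b'$, and the sign of $x$ matches that of $c'$ (or one of them vanishes), so that $c'$ and $a' x$ are either both nonnegative or both nonpositive.

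With that sign agreement in hand, $|c' - a' x| = \bigl||c'| - |a' x|\bigr| \leq \max\{|c'|,\, |a' x|\}$, and dividing through by $b'$ gives $|y| \leq \max\{|c'|/b',\, |a' x|/b'\}$. The first term equals $|c/b|$; the second is at most $a' = |a/d|$ because $|x| \leq b'$. Hence $|y| \leq \max\{|c/b|,\, |a/d|\}$, as required. The only genuine obstacle is the sign-matching observation above; once that is noticed, the rest is immediate from the standard parametrisation of Diophantine solutions, and I anticipate that the same sign-matching device will recur when deriving Corollaries \ref{cor Bezout lcm} and \ref{cor Bezout multi} from this lemma.
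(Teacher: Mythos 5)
Your proof is correct and takes essentially the same approach as the paper: both parametrise the solution set, restrict to the two representatives of $x$ nearest $0$ (one in $[0,|b/d|)$, one in $(-|b/d|,0]$), and pick between them by a sign condition so that the cancellation $|c'-a'x|\le\max\{|c'|,|a'x|\}$ goes through. The only cosmetic difference is that you normalise $a',b'>0$ first and then key the choice to the sign of $c'$, whereas the paper keeps the original signs and frames the choice as a comparison between the signs of $a/d$ and $c/b$; your normalisation makes the sign bookkeeping a little tidier.
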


\begin{proof}
Because $ax+by=c$ has a solution, $d = \gcd(a,b) $ divides $c$, and if $(x_0, y_0)$ is an integer solution,  then the set of all such solutions is $$\set{ \left. \left(x_0 +  \frac{b}{d} k , y_0 - \frac{a}{d} k\right) \, \right|  \, k \in \Z \, }.$$  Therefore there exists a solution $(x,y)$ with $0 \leq x < \frac{b}{d}$ and another with $\frac{b}{d} \leq x < 0$. Now $y = \frac{c}{b} - \frac{a}{b} x$, so $\frac{c}{b} - \frac{a}{d} < y \leq \frac{c}{b}$ for the first of these two solutions, and $ \frac{c}{b} < y \leq \frac{a}{d} +  \frac{c}{b}$ for the second.        
	
If  $\frac{a}{d}$ and $\frac{c}{b}$ have the same sign, then $|\frac{c}{b} - \frac{a}{d}| \leq \max   \set{|\frac{c}{b}|, |\frac{a}{d}|}$ and so the first of our two solutions satisfies the requirements of the lemma.  If they have opposite signs, then similarly the  second of our two solutions works.  
\end{proof}

\begin{cor} \label{cor Bezout lcm}
Under the hypotheses of Lemma~\ref{lemma Bezout},   $ax+by=c$  has an integer solution $(x,y)$ such that 
$|ax| \leq  \textup{lcm}(a,b)$ and  $|by| \leq \max \set{|c|, \textup{lcm}(a,b) }$.   	
\end{cor}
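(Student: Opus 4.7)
The plan is to derive the corollary directly from Lemma~\ref{lemma Bezout} by scaling the two bounds appropriately and using the identity $\mathrm{lcm}(a,b) = |ab|/d$, where $d = \gcd(a,b)$.

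First I would invoke Lemma~\ref{lemma Bezout} to produce an integer solution $(x,y)$ of $ax + by = c$ with
\[
|x| \;\leq\; \left|\tfrac{b}{d}\right| \qquad\text{and}\qquad |y| \;\leq\; \max\!\left\{\left|\tfrac{c}{b}\right|,\, \left|\tfrac{a}{d}\right|\right\}.
\]
Multiplying the first inequality by $|a|$ gives
\[
|ax| \;\leq\; \frac{|a|\,|b|}{d} \;=\; \mathrm{lcm}(a,b),
\]
which is the first of the two desired bounds.

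Next I would multiply the second inequality by $|b|$ to obtain
\[
|by| \;\leq\; \max\!\left\{|c|,\, \frac{|a|\,|b|}{d}\right\} \;=\; \max\{|c|,\, \mathrm{lcm}(a,b)\},
\]
which is precisely the second desired bound. There is no substantive obstacle here: once Lemma~\ref{lemma Bezout} is in hand, the corollary is a one-line rescaling combined with the elementary identity $\mathrm{lcm}(a,b)\gcd(a,b) = |ab|$. The only point that might warrant comment is the (harmless) observation that if $d \nmid c$ then there is no solution at all, so the hypothesis that a solution exists is implicitly $d \mid c$, matching the standing setup of Lemma~\ref{lemma Bezout}.
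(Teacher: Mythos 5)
Your proof is correct and follows essentially the same route as the paper's: the paper's entire proof is the one-line remark that the corollary follows from Lemma~\ref{lemma Bezout} because $\mathrm{lcm}(a,b) = ab/\gcd(a,b)$, which is precisely the rescaling you carry out. Your version is slightly more explicit in writing $|ab|/d$ rather than $ab/d$, but this is a purely presentational difference, not a different argument.
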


\begin{proof}
This follows  from  Lemma~\ref{lemma Bezout} because $\textup{lcm}(a,b) = ab / \textup{gcd}(a,b)$.
\end{proof}

\begin{cor} \label{cor Bezout multi}
Suppose the linear Diophantine equation 
\begin{equation} \label{eq mult linear diophantine}
a_1 x_1 +  \cdots + a_m x_m = c 
\end{equation} 
 has a solution $(x_1, \ldots, x_m)$.  Then it has a solution such that for all $i$,
 \begin{equation}  \label{eq mult diop bound}
|x_i| \leq \max \set{|a_1|, \ldots, |a_m|, |c|}.  
\end{equation}
\end{cor}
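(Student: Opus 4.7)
\textbf{Proof proposal for Corollary~\ref{cor Bezout multi}.}

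The plan is to argue by induction on $m$, reducing each inductive step to Lemma~\ref{lemma Bezout}. We may assume throughout that not all $a_i$ vanish, for otherwise $c=0$ and the all-zero solution works; similarly any zero coefficient can be absorbed by choosing $x_i=0$, so we may suppose every $a_i$ is non-zero.

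For the base case $m=2$, Lemma~\ref{lemma Bezout} gives a solution with $|x_1|\le |a_2/\gcd(a_1,a_2)|\le |a_2|$ and $|x_2|\le \max\{|c/a_2|,\,|a_1/\gcd(a_1,a_2)|\}\le \max\{|c|,\,|a_1|\}$, so \eqref{eq mult diop bound} holds. (The case $m=1$ is trivial: $x_1 = c/a_1$, with $|x_1|\le|c|$.)

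For the inductive step, suppose the result is proved for $m-1$ variables. Set $d = \gcd(a_1,\ldots,a_{m-1})$ and write $a_i = d\,b_i$ for $i<m$, where $\gcd(b_1,\ldots,b_{m-1})=1$. Any solution $(x_1,\ldots,x_m)$ of \eqref{eq mult linear diophantine} gives a solution $(y_*,x_m)$ of the two-variable equation
\begin{equation*}
d\,y + a_m\,x_m \;=\; c
\end{equation*}
with $y_*:=\sum_{i<m} b_i x_i$, so this equation admits some integer solution. Apply Lemma~\ref{lemma Bezout} to obtain a solution $(y_0,x_m^{\,\prime})$ with $|y_0|\le |a_m/\gcd(d,a_m)|\le |a_m|$ and $|x_m^{\,\prime}|\le \max\{|c/a_m|,\,d/\gcd(d,a_m)\}\le \max\{|c|,\,d\}$. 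Now we must realise $y_0$ as $\sum_{i<m} b_i x_i$ with small $x_i$; this is an $(m-1)$-variable Diophantine equation $b_1 x_1+\cdots+b_{m-1}x_{m-1}=y_0$ which has a solution since $\gcd(b_1,\ldots,b_{m-1})=1$ divides $y_0$. The inductive hypothesis then supplies integers $x_1,\ldots,x_{m-1}$ with $|x_i|\le \max\{|b_1|,\ldots,|b_{m-1}|,|y_0|\}\le \max\{|a_1|,\ldots,|a_m|\}$, where we used $|b_i|=|a_i|/d\le |a_i|$ and $|y_0|\le|a_m|$. Combining with the bound on $|x_m^{\,\prime}|$ yields \eqref{eq mult diop bound} for the $m$-variable equation.

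The only place that requires care is the bookkeeping in the reduction, specifically the factorisation $a_i=db_i$ that replaces the middle equation $a_1 x_1+\cdots+a_{m-1}x_{m-1}=dy_0$ by the primitive equation $\sum b_i x_i = y_0$; without this step, the naive induction would produce $|x_i|$ bounds scaling with $|dy_0|$, which can be as large as $\mathrm{lcm}(d,a_m)$ and would destroy the uniform bound sought in \eqref{eq mult diop bound}. I expect this to be the only subtle point; everything else is a direct application of Lemma~\ref{lemma Bezout} plus the inductive hypothesis.
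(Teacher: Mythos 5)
Your proof is correct and follows essentially the same route as the paper's: induction on $m$, collapsing all but one of the $a_i$ into a single gcd to reduce to a two-variable B\'ezout equation, applying Lemma~\ref{lemma Bezout}, and then dividing out the gcd before invoking the inductive hypothesis on the residual $(m-1)$-variable equation. The only cosmetic difference is that the paper peels off $a_1$ and groups $e=\gcd(a_2,\dots,a_m)$, whereas you peel off $a_m$ and group $d=\gcd(a_1,\dots,a_{m-1})$; the ``subtle point'' you flag (dividing by the gcd so the bounds do not inflate) is handled identically in the paper, where $y=\tfrac{a_2}{e}x_2+\cdots+\tfrac{a_m}{e}x_m$ is treated as the reduced equation.
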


\begin{proof}
We prove this corollary by induction on $m$.  If $m=1$ or if $m=2$ and one of   $a_1$ and $a_2$ is zero, then the result is trivial.  If $m=2$ and $a_1$ and $a_2$ are non-zero, then the result follows from Lemma~\ref{lemma Bezout}.  Let $m \geq 3$.  Assume $a_1$, \ldots, $a_m$ are all non-zero, else the result holds by  induction.   

The integers expressible as $a_2 x_2 +  \cdots + a_m x_m$ for some $x_2, \ldots, x_m \in \Z$  are precisely the multiples of  $e := \gcd(a_2, \ldots, a_m)$.  So the hypothesis of the corollary tells us that the Diophantine equation $a_1x_1+ e y=c$ has a solution $(x_1, y)$,  and Lemma~\ref{lemma Bezout} allows us to assume
that $|x_1| \leq | \frac{e}{d}|$ and $|y| \leq \max \set{|\frac{c}{e}|, |\frac{a_1}{d}|}$, where $d = \gcd(a_1, e)$.          

Now, for some $x_2, \ldots, x_m \in \Z$ we have  $e y = a_2 x_2 +  \cdots + a_m x_m$ and so $y = \frac{a_2}{e} x_2 +  \cdots + \frac{a_m}{e} x_m$, which is a Diophantine equation because $e$ divides each of $a_2$, \ldots, $a_m$. By induction, there exist  $x_2, \ldots, x_m \in \Z$ satisfying this equation with $|x_i| \leq \max \set{ |\frac{a_2}{e}|, \ldots, |\frac{a_m}{e}|, |y|}$. This gives $x_1, \ldots, x_m \in \Z$ satisfying \eqref{eq mult linear diophantine}, and in combination with the above  bounds on $|x_1|$ and $|y|$, this establishes \eqref{eq mult diop bound}.      
\end{proof}

\section{The structure of $G$} \label{s:G}

By way of reminder, $F = F(a_1, \ldots, a_m)$ is a rank-$m$ free group, $H = F\rtimes_\phi \Z$ is hyperbolic,  and $G$ is the HNN-extension $H \dot\ast_F$ of $H$: 
\begin{align*}
H & = \langle a_1, \ldots, a_m, s \mid s^{-1} a_i s = \phi(a_i)  \  \   \forall i \  \rangle \\ 
G &  =     \langle \, H, t  \mid [t,a_1]=  \cdots  = [t,a_m]=1   \rangle .
\end{align*} 

In preparation for the sections that follow,  we shall describe some decompositions and retractions of
$G$ and establish useful notation.  

We will work with the fixed basis $\{ a_1, \ldots, a_m \}$ for $F$, and to generate $H$, $G$, and $\Lambda$ we add  $s$, then $t$, and then $\lambda$, respectively (paying attention to the fact that the role of $t$ in $\Lambda$ is different from its role in $G$).
Our generating set for the rank-2 free group $E$ is $\{ s, t \}$. For $g \in \Lambda$, we write $| g |_{\Lambda}$ for the length of a shortest word in our generating set for $\Lambda$ representing $g$.  We define $| g |_E$,  $| g |_F$, $| g |_G$, and $| g |_H$ likewise.    

We have the semidirect product decomposition  $G = F  \rtimes E$.
We view  $E$, $F$, and $H$ as subgroups of $G$ and will  use three retractions:  
\begin{itemize}
\item $G \onto H$ killing $t$, denoted by $g \mapsto g_H$, 
\item $G \onto E$ killing $a_1, \ldots, a_m$, denoted by $g \mapsto g_E$, and 
\item $G \onto \Z \cong \langle s \rangle$ killing $a_1, \ldots, a_m, t$ denoted by $g \mapsto \sigma(g)$.  
\end{itemize}

Because these maps $G \onto H$ and  $G \onto E$ are   defined by killing certain generators,  
 \begin{equation} \label{eq length under quotients} 
\forall \gamma \in G, \  \max\set{|\gamma_E |_E, | \gamma_H |_H}  \leq | \gamma |_G
\end{equation}   
and because they are retracts,   
\begin{equation} \label{eq length under retracts}
\forall \gamma \in H, \  | \gamma |_G = | \gamma |_H  \ \text{ and } \ \forall \gamma \in E, \  | \gamma |_G = | \gamma |_E.
\end{equation}

 According to the decomposition
  $G = F  \rtimes E$,  every $g \in G$ can be expressed uniquely as $g_F g_E$ for some $g_F \in F$ and the $g_E \in E$ that is defined above.  Because $t$ acts trivially on $F$, 
 \begin{equation} \label{acting via sigma} 
\forall f \in F, \ \forall x \in E, \ \ x^{-1} f x  = s^{-\sigma(x)} f s^{\sigma(x)} \text{ in } G.    
\end{equation}

Define $E_0 := \ker( \sigma : E \to \Z)$. This is the set of elements of $E$ that commute with every element of $F$.  

We extend the above notation to the level of words: if $w$ is a word in $a_1, \ldots, a_m, s, t$ (the generators of $G$), then $w_H$ is the word obtained from $w$ by deleting all letters $t^{\pm 1}$, and $w_F$ and  $w_E$ are the reduced words on $a_1, \ldots, a_m$ and on $s, t$, respectively, such that $w = w_F w_E$ in $G$.

\section{The distortion of $\langle \lambda \rangle$ in $\Lambda$} \label{s: distortion}

Here we prove:

\begin{prop} \label{prop: dist exp}
For the $H$, $G$, and $\Lambda$ of Section~\ref{intro}, the central subgroup $\langle \lambda \rangle$ of $\Lambda$ is infinite-cyclic and  $\Dist_{\langle \lambda \rangle}^{\Lambda} \mleft(n\mright) \simeq 2^n$. 
\end{prop}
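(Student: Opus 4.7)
The plan is to analyze the central extension $\pi : \Lambda \onto G$ whose kernel is $\<\lambda\>$. That $\<\lambda\>\cong\Z$ is immediate from the decomposition $\Lambda^\phi = (H\times\<\lambda\>)\rtimes \<s,t\>$ recorded in the introduction, so the content of the proposition is the sandwich $\Dist_{\<\lambda\>}^{\Lambda}(n) \simeq 2^n$.

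For the upper bound, I would start with a word $w$ of length at most $n$ in the generators of $\Lambda$ representing $\lambda^N$; its image $\bar w := \pi(w)$ is trivial in $G$. I would reduce $\bar w$ to the empty word using the two families of defining relators of $G$ and lift the reduction step-by-step to $\Lambda$: the first family $s^{-1}a_is\,\phi(a_i)^{-1}$ lifts to relators of $\Lambda$ without introducing $\lambda$, whereas each application of $[t,a_i]$ lifts to $[t,a_i]\lambda^{-1}$. Consequently $|N|$ is bounded above by the number of $[t,a_i]$-applications in any chosen reduction. I would implement the reduction as a left-to-right sweep of $\bar w$, maintaining a current block $B\in F$ at the far left and pushing each $s^{\pm 1}$ or $t^{\pm 1}$ past $B$ to join a growing right-hand block in $E$: a push past $s^{\pm 1}$ replaces $B$ by $\phi^{\pm 1}(B)$, hence multiplies $|B|$ by at most $c := \max_{i,\epsilon}|\phi^{\epsilon}(a_i)|$ and uses no $[t,a_i]$-relator; a push past $t^{\pm 1}$ uses $|B|$ many $[t,a_i]$-relators and leaves $|B|$ unchanged. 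Since $w$ contains at most $n$ letters of each type, $|B| \leq n c^n$ throughout, and the total $[t,a_i]$ count is at most $n^2 c^n = O(2^{Cn})$. The trivial residual $F$- and $E$-blocks can then be reduced to the empty word by free reduction alone, introducing no further $\lambda$. Hence $|N| \leq C \cdot 2^{Cn}$.

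For the lower bound, I would first establish
\[
[t,w] = \lambda^{\epsilon(w)} \quad \text{in } \Lambda
\]
for every word $w$ on $a_1^{\pm 1}, \ldots, a_m^{\pm 1}$, where $\epsilon(w)$ is the total exponent sum. This is a short induction on word length using $[t,a_i^{\pm 1}] = \lambda^{\pm 1}$ and the centrality of $\lambda$. Applied to $s^{-n}a_is^n = \phi^n(a_i)$, this yields
\[
[t,\, s^{-n} a_i s^n] = \lambda^{\epsilon(\phi^n(a_i))} \quad \text{in } \Lambda.
\]
The left-hand side has $\Lambda$-length $4n+4$, while the homological stretch hypothesis provides an index $i$ and a constant $c>1$ with $|\epsilon(\phi^n(a_i))| \geq c^n$ for all $n$. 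This exhibits powers of $\lambda$ of $\Lambda$-length $O(n)$ but $\<\lambda\>$-length $\geq c^n$, so $\Dist_{\<\lambda\>}^{\Lambda}(n) \succeq 2^n$.

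As the introduction signals, the substantive input is the homological stretch hypothesis that drives the lower bound; the upper bound is essentially a bookkeeping exercise on the sweep. The main obstacle I anticipate is verifying that the $F$-block cannot secretly balloon beyond $n c^n$ at some intermediate stage and that every $(a,t)$-commutation is paid for correctly in the $\lambda$-count — a careful but routine verification.
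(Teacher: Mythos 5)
Your proof is correct and takes essentially the same route as the paper's: the lower bound rests on $[t, s^{-n} a_i s^n] = \lambda^{\epsilon(\phi^n(a_i))}$ together with the homological-stretch hypothesis, and the upper bound comes from shuffling the $s^{\pm 1}$- and $t^{\pm 1}$-letters apart from the $a_i^{\pm 1}$-letters and counting the $\lambda$'s produced each time a $t^{\pm 1}$ crosses the exponentially inflated $F$-block, exactly as in the paper. The only packaging difference is that the paper isolates the shuffling estimate as the stand-alone Lemma~\ref{lem: dist}, stated in terms of the prefix-height quantity $\Sigma(W)$ rather than $|W|$, because that sharper form is re-used in Section~\ref{s: CL in lambda}; your in-line bookkeeping (and the bound $|N| \le n + n^2 c^n \simeq 2^n$) delivers the same conclusion for the present proposition.
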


We noted in the introduction that one can see that $\<\lambda\>$ is infinite by observing that $\Lambda$
decomposes as $(H\times\<\lambda\>)\rtimes \<s,t\>$.  The upper bound  on the distortion of $\<\lambda\>$ that we require
is a special case of a well-known general fact: if $1\to Z\to\widetilde{\G}\to\G\to 1$ is a central extension of a finitely
presented group $\G$, and $Z$ is finitely generated, then the distortion of $Z$ in $\widetilde{\G}$ is bounded above by the
Dehn function of $\G$.  In our case,   a standard diagrammatic argument  shows that the Dehn function of $G$  is exponential.
But we give a more detailed argument (Lemma \ref{lem: dist}) because features of its proof give a sharper estimate
that will be needed in  Section~\ref{s: CL in lambda}. It also provides a useful introduction to the use of the notation 
established in the previous section.

\begin{lemma}\label{lem: dist}
There exists a constant $C>1$ such that if $\overline{W}$ is a word in the generators of $\Lambda$ and $W$ is the word obtained from $\overline{W}$ by deleting all letters $\lambda^{\pm 1}$, then $\overline{W} = W_FW_E\lambda^N$ in $\Lambda$ for some $N \in \Z$ and  
\begin{align}
|W_F| & \leq C^{|W|}, \label{eq dist WF} \\
|W_E| & \leq |W|, \ \text{and} \label{eq dist WE}  \\
|N| & \leq |\ell| + \alpha C^{\Sigma\left(W\right)}, \label{eq dist N}
\end{align}
where $\ell$ is the exponent sum of the letters $\lambda^{\pm 1}$ in $\overline{W}$, and $\alpha$ is the number of letters $a_1^{\pm 1}, \ldots, a_m^{\pm 1}$ in $\overline{W}$, and $\Sigma\left(W\right)$ is the maximum of $\left|\sigma\left(W_0\right)\right|$ among all prefixes $W_0$ of $W$.    	
\end{lemma}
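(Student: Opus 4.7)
The plan is to rewrite $\overline{W}$ by sorting its letters --- pushing all $a_j^{\pm 1}$-letters to the left, all $s^{\pm 1}, t^{\pm 1}$-letters to the middle, and collecting all $\lambda$-contributions at the right --- and to track the cost of each commutation. Since $\lambda$ is central, we may begin by moving all explicit $\lambda^{\pm 1}$-letters in $\overline{W}$ past everything else, obtaining $\overline{W} = W \lambda^{\ell}$ in $\Lambda$. It then suffices to prove the three bounds with $\overline{W}$ replaced by $W$ and $N$ replaced by $N - \ell$; the stated estimate on $|N|$ follows by the triangle inequality.

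I would proceed by processing $W$ from left to right, maintaining the invariant that each prefix $W_0$ satisfies $W_0 = V_F V_E \lambda^{N_0}$ in $\Lambda$ with $V_F \in F$ and $V_E = (W_0)_E$. When appending a letter $z \in \{s^{\pm 1}, t^{\pm 1}\}$, simply extend $V_E$; when appending $z = a_i^{\pm 1}$, rewrite $V_E z = (V_E z V_E^{-1}) V_E$ and compute $V_E z V_E^{-1}$ in $\Lambda$ using two key identities derived from the defining relations: $s^\epsilon w s^{-\epsilon} = \phi^{-\epsilon}(w)$ for $w \in F$, with no $\lambda$-cost, and $t^\epsilon w t^{-\epsilon} = w \lambda^{\epsilon \cdot e(w)}$, where $e(w)$ is the total exponent sum of $w \in F$ in the generators $a_j$ (the second identity follows by induction on $|w|$ from $[t, a_j] = \lambda$). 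Peeling $V_E$ off letter by letter yields $V_E z V_E^{-1} = \phi^{-\sigma(V_E)}(z) \cdot \lambda^{N_z}$ for an integer $N_z$, whereupon one updates $V_F \mapsto V_F \cdot \phi^{-\sigma(V_E)}(z)$ and $N_0 \mapsto N_0 + N_z$.

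The bound $|W_E| \leq |W|$ is immediate. For $|W_F|$, fix $C_0 > 1$ so that $|\phi^{\pm k}(a_j)| \leq C_0^{|k|}$ for all $k$ and $j$; then each case-2 step appends to $V_F$ a word of length at most $C_0^{|\sigma(V_E)|} \leq C_0^{\Sigma(W)}$, and summing over the at most $|W|$ such steps (absorbing the polynomial factor into $C$) yields $|W_F| \leq C^{|W|}$. For the bound on $|N - \ell|$, telescoping the case-2 computation produces
\begin{equation*}
N_z \ = \ \sum_{j \,:\, y_j = t^{\epsilon_j}} \epsilon_j \cdot e\mleft(\phi^{-\delta_j}(z)\mright),
\end{equation*}
where $V_E = y_1 \cdots y_k$ and $\delta_j = \sigma(y_{j+1} \cdots y_k)$ is the signed $s$-count of a suffix of $V_E$. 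Because each such suffix is the $E$-projection of a suffix of the current processed prefix $W_0$, the identity $\sigma(\text{suffix}) = \sigma(W_0) - \sigma(\text{prefix})$ forces $|\delta_j| \leq 2\Sigma(W)$, hence $|e(\phi^{-\delta_j}(z))| \leq C_0^{2\Sigma(W)}$. Accumulating the contributions coming from the $\alpha$ case-2 letters and enlarging $C$ to absorb the constants then produces the stated bound.

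The main obstacle lies precisely in that last bound: the statement demands dependence on $\Sigma(W)$ rather than the coarser $|W|$, so one must verify that the intermediate $\phi$-iterates encountered during the commutation never grow beyond what $\Sigma(W)$ controls, even though $V_E$ itself may be much longer. The prefix-suffix trick bounding $|\delta_j|$ by $2\Sigma(W)$ is the decisive step, and this is the ``sharper estimate'' the preamble alludes to as needed in Section~\ref{s: CL in lambda}.
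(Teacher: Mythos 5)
Your approach matches the paper's: shuffle the $E$-letters rightwards (via $s^\varepsilon a_i^\mu \mapsto \phi^{-\varepsilon}(a_i^\mu)s^\varepsilon$ and $t^\varepsilon a_i^\mu \mapsto a_i^\mu t^\varepsilon \lambda^{\varepsilon\mu}$), track the cost, and observe that each $a_i$-letter is replaced en route by $\phi^k(a_i^{\pm 1})$ with $|k|$ controlled by $\Sigma(W)$. The paper asserts $|k|\le\Sigma(W)$ at intermediate stages while your careful prefix-minus-prefix bookkeeping gives $|k|\le 2\Sigma(W)$; that factor of $2$ is harmless (absorbed into $C$), and your version is in fact the cleaner justification of the step the paper states tersely.
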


\begin{proof} Given how $G = F  \rtimes E$, the words $W = W_F W_E$  in $G$ and $W$ can converted to $W_F W_E$ through a sequence of words all of which equal $W$ in $G$ by shuffling all its letters $s^{\pm 1}$ and $t^{\pm 1}$ to the righthand end---that is,  by making successive substitutions of subwords (i) $s^{\varepsilon}  a^{\mu}_i \mapsto \phi^{-\varepsilon}(a^{\mu}_i) s^{\varepsilon}$  and (ii) $t^{\varepsilon}  a^{\mu}_i \mapsto a^{\mu}_i  t^{\varepsilon}$ for $\varepsilon, \mu  \in \set{\pm 1}$---and then freely-reducing.    

The same sequence of substitutions, except with each type-(ii) substitution producing a central $\lambda^{\varepsilon \mu}$, converts $\overline{W}$ to   $W_F W_E \lambda^N$ in $\Lambda$ through a sequence of words all of which represent the same element of $\Lambda$.  
 
 At an intermediate point in this shuffling process any one of the $(|W| - |W_E|)$ letters $a_i^{\pm 1}$  in $W$ has been replaced by a subword freely equal to $\phi^k(a_i^{\pm 1})$ for some $k$ such that $|k| \leq  \Sigma(W)$---such a subword has length at most $C^{\Sigma(W)}$ for a suitable constant $C >0$.  Accordingly, we can bound the exponent-sum of the $\lambda$ that arise as the $t^{\pm 1}$ in $W$ shuffle past the $a^{\pm 1}_i$ so as to deduce the claimed bound. 
\end{proof}

\begin{proof}[Proof of Proposition~\ref{prop: dist exp}] We have already explained why $\<\lambda\>$ is infinite.

If $\phi$ is any atoroidal automorphism of a free group $F$,  then $w\mapsto |\phi^k(w)|$ grows exponentially for all 
non-trivial $w\in F$, but this is not enough to force exponential distortion in the centre of $\Lambda$.  However, we
have assumed that $\phi$ has homological stretch (which is automatic when  $\phi$ is a positive automorphism),
so  for some $i\in\{1,\dots,m\}$, writing $f(n)$ for
 the exponent sum of the letters in $\phi^n\left(a_i\right)$, we have  $|f(n)|  \simeq 2^n$.
 And  $[t, s^{-n} a_i s^n]$ is a word of length $4n+4$ that equals $\lambda^{f(n)}$ in $\Lambda$,
so $\Dist_{\langle \lambda \rangle}^{\Lambda} \mleft(n\mright) \succeq 2^n$.

The reverse bound follows from Lemma~\ref{lem: dist} because  $\ell=0$, $\alpha \leq |W|$  and   $\Sigma\left(W\right) \leq |W|$ for all words $W$ on the generators of $G$.    
\end{proof}

\section{Conjugators in $H$ and $E$} \label{s: conjugators in H and E}

Accounts of the following proposition  can be found in \cite{Lysenok},  \cite{BrH} pp.\ 451--454,  and \cite{BrRiSa}.    
The proof is a careful coarsening of an observation concerning conjugacy in finitely generated free groups:
there,  working with a free basis,
two words represent conjugate elements if and only if they have  cyclic permutations that are freely equal,
which shows that the constant $C$ of the following proposition can be taken to be $1$ in that setting.

\begin{prop}[Lysenok] \label{Lysenok}
If $\Gamma$ is a hyperbolic group, then (with respect to any fixed finite generating set for $\Gamma$) there exists $C>0$ such that  the conjugator length function satisfies $\CL_{\Gamma}\mleft(n\mright) \leq C n$ for all $n \in \N$. 
\end{prop}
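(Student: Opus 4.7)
The plan is to exploit $\delta$-hyperbolicity of the Cayley graph of $\Gamma$ via a thin-quadrilateral argument. The proof has two stages: a cheap reduction to the case where $u$ and $v$ are of minimal length in their conjugacy classes, and then a geometric shortening argument applied to a minimal conjugator.

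For the reduction, I use the standard observation that if $u$ is not of minimal conjugacy length, some $x\in A^{\pm 1}$ satisfies $|xux^{-1}|<|u|$; iterating at most $|u|$ steps produces a cyclically minimal conjugate $u_0$ together with a conjugator $p$ of length $\leq|u|$. Applying the same reduction to $v$ and combining produces a conjugator between $u$ and $v$ from any conjugator between $u_0$ and $v_0$ at additive cost $|u|+|v|$, so it suffices to prove the linear bound when $u$ and $v$ are both cyclically minimal.

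Now let $w$ be a minimal conjugator with $uw=wv$. Consider the geodesic quadrilateral with vertices $1, u, uw=wv, w$: its top side is a geodesic $\alpha\colon[0,|w|]\to\Gamma$ from $1$ to $w$, its bottom is the translate $u\alpha$ (a geodesic by left-invariance), and its vertical sides have lengths $|u|$ and $|v|$. By $\delta$-hyperbolicity, the quadrilateral is $2\delta$-thin and fellow-travelling forces the element $\gamma(t):=\alpha(t)^{-1}u\alpha(t)$ to be of bounded length for each integer $t$. If there are integer parameters $t_1<t_2$ with $\gamma(t_1)=\gamma(t_2)$, then $c:=\alpha(t_1)\alpha(t_2)^{-1}$ centralises $u$ and $w':=cw$ is a conjugator of length strictly less than $|w|$, contradicting minimality of $w$.

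The main technical difficulty is extracting a linear rather than exponential bound from this mechanism. A crude fellow-traveller estimate only yields $|\gamma(t)|\leq|u|+|v|+O(\delta)$, and pigeonholing over all group elements of that length would make the resulting bound on $|w|$ exponential in $|u|+|v|$. The sharp linear bound requires additional structure of hyperbolic groups: centralisers of infinite-order elements are virtually cyclic with quasi-convex underlying cyclic subgroup, so the geodesic $\alpha$ can be shown to stay within bounded distance of the axis of $u$, confining $\gamma(t)$ to a bounded-size remainder set modulo a power of a maximal root of $u$. Pigeonholing over this bounded-size set then produces the required repetition $\gamma(t_1)=\gamma(t_2)$ within linearly-many steps. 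This is the essential content of Lysenok's original argument, expounded in detail in the references (Bridson--Haefliger; Bridson--Riley--Sale) cited above.
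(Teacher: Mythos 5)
The paper does not actually supply a proof of Proposition~\ref{Lysenok}; it cites the result to \cite{Lysenok}, \cite{BrH}~pp.~451--454, and \cite{BrRiSa}, and only offers a one-sentence gloss (that the argument coarsens the cyclic-permutation criterion for conjugacy in free groups). Your write-up is therefore being measured against the standard argument rather than against anything explicit in the paper.

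Your sketch captures the right shape of that argument --- reduce to a nicely conjugacy-reduced representative, run the thin-quadrilateral/fellow-travelling argument on a shortest conjugator, pigeonhole on the elements $\gamma(t)=\alpha(t)^{-1}u\alpha(t)$, and shorten by cutting out a loop that centralises $u$. But two points prevent it from being a proof. First, the reduction step as stated is suspect: the claim that whenever $u$ is not of minimal length in its conjugacy class some \emph{single} generator $x\in A^{\pm1}$ satisfies $|xux^{-1}|<|u|$ is a free-group fact (cyclic reduction) and is not obviously true in a general hyperbolic group, where the function $g\mapsto|gug^{-1}|$ can have local minima that are not global; the argument in \cite{BrH} and \cite{Lysenok} reduces instead to \emph{cyclic geodesics} (words all of whose powers are uniform quasigeodesics), which is a weaker and more robust condition and is reached by conjugating by subwords of $u$, costing $O(|u|)$. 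Second, and more importantly, you yourself flag that the crude pigeonhole over all group elements of length $\leq |u|+|v|+O(\delta)$ only yields an exponential bound, and you then defer the genuine content --- that quasiconvexity of centralisers/axes forces the repetition $\gamma(t_1)=\gamma(t_2)$ within linearly many steps, and in fact gives a \emph{constant} bound on the conjugator between cyclically geodesic representatives of length above a fixed threshold --- to the references. Since that is precisely the step that makes the proposition nontrivial, the proposal is an outline of the known argument rather than a proof; it is not incorrect, but it has a genuine and explicitly acknowledged gap at its quantitative heart.
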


So, given that $H$ is hyperbolic and $E$ is free, and given the length bounds \eqref{eq length under quotients}  and \eqref{eq length under retracts}, we deduce:

\begin{cor} \label{cor:hyp H and E conjugators}
There is a constant $C>0$ such that if $u \sim v$ in $G$ and $n := |u| + |v|$, then there exist  $x_0 \in E$ and $w_0 \in H$  such that $u_E x_0 = x_0 v_E$ in $E$ and  $u_H w_0 =w_0 v_H$ in $H$ (and so in $G$), and
\begin{align}
|x_0|_E & \leq n, \text{ and}   \label{eq x_0}  \\ 
|w_0|_H =|w_0|_G  & \leq  C n.    \label{eq: w0G upper bound} 
\end{align}
\end{cor}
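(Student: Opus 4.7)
The plan is to apply the retractions $G \onto E$ and $G \onto H$ to the conjugacy relation $u \sim v$ in $G$ and then invoke Proposition~\ref{Lysenok} inside each of the hyperbolic groups $E$ and $H$ separately. Since any group homomorphism preserves conjugacy, the retractions immediately give $u_E \sim v_E$ in $E$ and $u_H \sim v_H$ in $H$. The retraction $G \onto H$ acts at the word level by deleting all letters $t^{\pm 1}$, so $|u_H|_H \leq |u|$ and $|v_H|_H \leq |v|$; by the analogous consideration (or directly from \eqref{eq length under quotients}), also $|u_E|_E \leq |u|$ and $|v_E|_E \leq |v|$. Thus the conjugate pairs in $E$ and in $H$ each have combined length at most $n$.

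For $E = \langle s,t\rangle$, which is free of rank $2$, the constant of Proposition~\ref{Lysenok} may be taken to be $1$, as noted in the paragraph preceding the proposition (two words in a free group are conjugate iff they have freely equal cyclic permutations). This produces the required $x_0 \in E$ with $u_E x_0 = x_0 v_E$ in $E$ and $|x_0|_E \leq n$. For the hyperbolic group $H$, Proposition~\ref{Lysenok} directly yields a constant $C>0$ (depending only on $H$ and its generating set) and an element $w_0 \in H$ with $u_H w_0 = w_0 v_H$ in $H$ and $|w_0|_H \leq Cn$. This equation automatically holds in the overgroup $G$ as well, and the identity $|w_0|_G = |w_0|_H$ is exactly the content of \eqref{eq length under retracts}, since $H$ is a retract of $G$.

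I do not foresee any genuine obstacle: the corollary is a direct repackaging of Proposition~\ref{Lysenok} through the two retractions, together with the length bookkeeping already set up in Section~\ref{s:G}. The only step meriting a moment's care is verifying that the word-length bounds $|u_E|_E, |u_H|_H \leq |u|$ (and likewise for $v$) transfer correctly through the retractions so that Lysenok's theorem is being applied with the right input size $n$.
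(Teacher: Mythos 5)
Your proof is correct and takes essentially the same route as the paper: project via the retractions $G \onto E$ and $G \onto H$, bound $|u_E|_E + |v_E|_E$ and $|u_H|_H + |v_H|_H$ by $n$ using \eqref{eq length under quotients}, apply Proposition~\ref{Lysenok} in each factor (with constant $1$ in the free group $E$), and finish with \eqref{eq length under retracts}. The paper states the deduction in a single line without spelling these steps out, but the reasoning is identical to yours.
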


We work with a  torsion-free hyperbolic group $H$ because the following well-known result -- a consequence of Corollary~3.10 of \cite[Chapter III.$\Gamma$]{BrH}, for example -- tells us that centralisers of elements in $H$ have a straightforward structure. It will allow us to   understand centralisers of elements in $G$ in Lemma~\ref{l: Centralizers in hyp case}, which will then help us   understand conjugacy.

\begin{prop}[Gromov]  \label{prop:centralizers in hyp}
If $u$ is a non-identity element of a torsion-free hyperbolic group $\Gamma$, then the centraliser $C_{\Gamma}(u)$ of $u$ is cyclic and is generated by any maximal root of $u$.     
\end{prop}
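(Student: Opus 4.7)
The plan is to reduce to the cited structural result on centralisers in hyperbolic groups and then perform two short upgrades using the torsion-free hypothesis. Since $\Gamma$ is torsion-free and $u\neq 1$, the element $u$ has infinite order. Corollary~3.10 of \cite[Chapter III.$\Gamma$]{BrH} then gives that $C_\Gamma(u)$ contains $\langle u\rangle$ as a finite-index subgroup; in particular $C_\Gamma(u)$ is finitely generated and virtually infinite cyclic.

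Next I would promote ``virtually infinite cyclic'' to ``infinite cyclic''. Any finitely generated torsion-free virtually cyclic group is in fact infinite cyclic: one can see this either by observing that such a group has two ends and appealing to Stallings' ends theorem, or by noting that it has cohomological dimension $1$, hence is free by Stallings, and any free group containing $\mathbb{Z}$ with finite index must itself be $\mathbb{Z}$. Fix a generator $v$ of $C_\Gamma(u)$, so that $u=v^n$ for a unique non-zero integer $n$.

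Finally I would identify $v$ with a maximal root. Any root $w$ of $u$ (that is, any $w$ with $w^k=u$ for some $k\geq 1$) commutes with $u$, so lies in $\langle v\rangle$, and hence has the form $v^\ell$ for some $\ell\in\mathbb{Z}$. If $v$ itself were a proper power, say $v=w^k$ with $k\geq 2$, then writing $w=v^m$ would give $v=v^{mk}$, forcing $mk=1$ by torsion-freeness, a contradiction; so $v$ (and $v^{-1}$) are maximal roots of $u$. Conversely, any maximal root must be of the form $v^\ell$ with $|\ell|=1$, since otherwise $v^\ell$ would be a proper power of $v$. Hence every maximal root generates $C_\Gamma(u)$, as claimed.

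The only even slightly subtle step in this plan is the virtually-cyclic-to-cyclic passage, but that is wholly standard; the rest is a direct consequence of the cited corollary together with the definition of a maximal root.
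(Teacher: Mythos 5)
Your argument is correct and follows exactly the route the paper intends: it states this proposition as a known consequence of Corollary~3.10 of \cite[Chapter III.$\Gamma$]{BrH} without spelling out the details, and you have supplied precisely the standard bridge (virtually cyclic $+$ torsion-free $\Rightarrow$ infinite cyclic, then the generator is a maximal root). Nothing further to flag.
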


We will also call on the following fact, which is of surprisingly recent origin \cite[Proposition~4.7]{Bridson13}.

\begin{prop}[\cite{Bridson13}] \label{lengths of roots}  If  $\Gamma$ is a torsion-free hyperbolic group, then it enjoys the  \emph{uniformly monotone cyclics} property: there exists $C >0$ such that for all $h \in \Gamma$ and all non-zero integers $r$ we have $|h^i|_{\Gamma} \leq C |h^r|_{\Gamma}$ for all $0 < i <|r|$. \end{prop}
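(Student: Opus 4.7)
The plan is to deduce the uniformly monotone cyclics property from the principle that, in a torsion-free hyperbolic group, every cyclic subgroup $\langle h \rangle$ with $h \neq 1$ is a \emph{uniformly} quasi-geodesic in $\Gamma$: there exist constants $\lambda \geq 1$ and $\epsilon \geq 0$ depending only on $\Gamma$ (not on $h$) such that
\[
\frac{|m|}{\lambda} - \epsilon \;\leq\; |h^m|_\Gamma \;\leq\; \lambda |m| + \epsilon
\]
for every integer $m$. The key input behind this uniformity is the fact that in a hyperbolic group the stable translation length $\tau(h) := \lim_{n \to \infty} |h^n|_\Gamma / n$ takes values in $\frac{1}{N}\mathbb{Z}$ for some integer $N = N(\Gamma)$, so that in the torsion-free setting $\tau(h) \geq 1/N$ whenever $h \neq 1$; this uniform positive lower bound on $\tau$ forces the quasi-geodesic constants to depend only on $\delta$ and $N$.

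Granting these bounds, fix a non-identity $h \in \Gamma$, a non-zero $r \in \mathbb{Z}$, and $i$ with $0 < i < |r|$. The upper bound applied to $h^i$ gives $|h^i|_\Gamma \leq \lambda i + \epsilon \leq \lambda |r| + \epsilon$, and the lower bound applied to $h^r$ gives $|h^r|_\Gamma \geq |r|/\lambda - \epsilon$. I would then split into two cases. When $|r| \geq 2\lambda\epsilon$, the lower bound simplifies to $|h^r|_\Gamma \geq |r|/(2\lambda)$, whence
\[
|h^i|_\Gamma \;\leq\; \lambda|r| + \epsilon \;\leq\; 2\lambda|r| \;\leq\; 4\lambda^2 \, |h^r|_\Gamma.
\]
When $|r| < 2\lambda\epsilon$, the quantity $|h^i|_\Gamma$ is bounded by the absolute constant $K := 2\lambda^2 \epsilon + \epsilon$; since $h$ has infinite order we have $h^r \neq 1$ and hence $|h^r|_\Gamma \geq 1$, giving $|h^i|_\Gamma \leq K \cdot |h^r|_\Gamma$. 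Setting $C := \max\{4\lambda^2,\, K\}$ completes the argument.

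The main obstacle is the uniform quasi-geodesicity step. The statement that each individual $\langle h \rangle$ is quasi-geodesic in a hyperbolic group is classical, but the uniformity of the constants across all non-identity $h$ is the real content: without the uniform positive lower bound on translation length (due to Delzant, Swenson, and others), the multiplicative constants $\lambda, \epsilon$ would depend on $h$, and the whole argument would collapse. An alternative, more direct route indicated in \cite{Bridson13} is to work with a quasi-axis for $h$ and exploit that consecutive $h$-translates of a point on the axis are at distance comparable to $\tau(h)$ with additive error controlled by $\delta$ alone; combined with $\tau(h) \geq 1/N$, this gives the two displayed quasi-geodesic inequalities directly and so bypasses any stable-length manipulations.
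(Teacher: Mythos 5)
Your argument has a genuine gap. The claimed upper bound $|h^m|_\Gamma \le \lambda|m|+\epsilon$ with $\lambda,\epsilon$ depending only on $\Gamma$ is false: in the free group $F_2=\langle a,b\rangle$ take $h=a^L$ with $L$ large; then $|h^1|_{F_2}=L$, so the inequality at $m=1$ would force $L\le\lambda+\epsilon$. More generally the growth rate of $n\mapsto|h^n|_\Gamma$ is governed by $\tau(h)$, and while $\tau(h)$ is bounded \emph{below} by $1/N$, it is not bounded \emph{above} uniformly in $h$; nor is the additive ``off-axis'' contribution to $|h^n|_\Gamma$ uniformly bounded. Your second-paragraph case analysis leans entirely on this nonexistent uniform upper quasi-geodesic bound, so it collapses at that step. (The lower bound $|h^r|_\Gamma\ge |r|/\lambda-\epsilon$ that you state is fine, and the uniform positivity of $\tau$ that you cite for it is correct.)

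The quasi-axis route you gesture at in the final paragraph is the right idea, but what you should extract from it is subtler than ``the two displayed inequalities hold uniformly'' --- they don't. What actually works is that the quantities which are \emph{not} uniformly controlled, namely $\tau(h)$ and the distance $D$ from the basepoint to a quasi-axis of $h$, \emph{cancel} when one compares $|h^i|_\Gamma$ with $|h^r|_\Gamma$. Up to an additive error $O(\delta)$ one has $|h^n|_\Gamma = 2D + |n|\,\tau(h)$ for $n\ne 0$ (this is an exact identity when $\Gamma$ acts on a tree), so for $0<i<|r|$ the difference $|h^i|_\Gamma - |h^r|_\Gamma$ equals $(i-|r|)\tau(h)+O(\delta)\le O(\delta)$, i.e.\ the sequence is monotone up to a uniform additive error. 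Torsion-freeness then gives $h^r\ne 1$, hence $|h^r|_\Gamma\ge 1$, and one concludes $|h^i|_\Gamma\le (1+O(\delta))\,|h^r|_\Gamma$. Note that in this form the argument does not even need the lower bound $\tau(h)\ge 1/N$: it needs only $\tau(h)\ge 0$ and $h^r\ne 1$. For the record, the paper itself supplies no proof here; the statement is cited as Proposition~4.7 of \cite{Bridson13}.
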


In the context of $G$, Propositions~\ref{prop:centralizers in hyp} and \ref{lengths of roots} contribute to our next lemma.

  Suppose $u$ is a word  on the generators of $G$.  When $u_E$, $u_F$, and $u_H$ are non-identity elements,  Proposition~\ref{prop:centralizers in hyp} applies to each and tells us that their centralisers in $E$, $F$, and $H$ (respectively) are generated by maximal roots  $u'_E$ of $u_E$, $u'_F$ of $u_F$, and $u'_H$ of $u_H$ (respectively).  Here, take $u'_E$ to be a reduced word in $\set{s,t}^{\pm 1}$, and $u'_F$  a reduced word in $\set{a_1, \ldots, a_m}^{\pm 1}$, and $u'_H$ to be a geodesic word in the generators for $H$. The following lemma records length bounds and in the cases of   $u_F$ and $u'_F$ gives contrasting bounds on word length $| \cdot |$ and on  the lengths $| \cdot |_G$ of shortest representatives in $G$.    

\begin{lemma} \label{lem: length estimates for roots}
There exists a constant $C > 1$ such that if     $u$ is a word of length at most $n$  on the generators of $G$, then (when defined as explained above)   
\begin{align}  
\max \{ |u_E|,  |u_H|   \} &  \leq n,  \label{eq lengths of u_E and u_H}  \\  
\max \{  |u'_E|,  |u'_H|  \} &  \leq C n,  \label{eq lengths of roots as words1}  \\  
\max \{ |u_F|,     |u'_F|   \} & \leq C^n,   \label{eq lengths of roots as words2} \\ 
 |u_F|_G     & \leq 2 n. \label{eq lengths of roots in G} 
\end{align}
\end{lemma}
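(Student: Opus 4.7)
The plan is to derive each of the four inequalities from the definitions, using only the retract/decomposition structure of $G=F\rtimes E$ together with Proposition~\ref{lengths of roots} and the basic fact that maximal roots in free groups are at most as long as their powers. The bounds \eqref{eq lengths of u_E and u_H} are immediate: $u_H$ is $u$ with every letter $t^{\pm 1}$ deleted, and the reduced word $u_E$ is obtained from $u$ by deleting every letter $a_i^{\pm 1}$ and then freely reducing, and neither operation can lengthen the word. The bound \eqref{eq lengths of roots in G} drops out of the decomposition $u=u_F u_E$ in $G$, which gives $u_F=u\,u_E^{-1}$: using \eqref{eq length under retracts} I have $|u_E|_G=|u_E|_E\le|u_E|\le n$, whence $|u_F|_G\le|u|_G+|u_E|_G\le 2n$.

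For \eqref{eq lengths of roots as words1}, in the free group $E$ a power decomposition $u_E=(u'_E)^k$ with $k\ne 0$ forces $|u'_E|=|u_E|/|k|\le|u_E|\le n$. In the torsion-free hyperbolic group $H$, I would write $u_H=(u'_H)^r$ and apply Proposition~\ref{lengths of roots} with $h=u'_H$ and $i=1$: when $|r|\ge 2$ this yields $|u'_H|_H\le C\,|u_H|_H\le Cn$, while the case $|r|=1$ is trivial since then $u'_H=u_H^{\pm 1}$. Because $u'_H$ is chosen as a geodesic word, $|u'_H|$ agrees with $|u'_H|_H$.

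The one step that requires real care is \eqref{eq lengths of roots as words2}, and the plan here is to reuse the shuffling argument from the proof of Lemma~\ref{lem: dist}: convert $u$ to $u_F u_E$ in $G$ by repeatedly substituting $s^\varepsilon a_i^\mu\mapsto \phi^{-\varepsilon}(a_i^\mu)\,s^\varepsilon$ and $t^\varepsilon a_i^\mu\mapsto a_i^\mu\,t^\varepsilon$ and then freely reducing. Each of the at most $n$ letters $a_i^{\pm 1}$ in $u$ is ultimately replaced by $\phi^k(a_i^{\pm 1})$ for some integer $k$ whose magnitude is bounded by the number of $s^{\pm 1}$-letters that cross over it, and so $|k|\le n$. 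Since $\phi$ is fixed, $|\phi^k(a_i^{\pm 1})|\le C_1^{|k|}$ for some $C_1>1$, giving $|u_F|\le n\,C_1^n\le C^n$ after enlarging $C$. Freeness of $F$ then yields $|u'_F|\le|u_F|\le C^n$ exactly as in the $E$ case.

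The exponential estimate on $|u_F|$ is the main obstacle, though essentially all the required bookkeeping has already been carried out inside Lemma~\ref{lem: dist}; the other three inequalities are pure unwinding of definitions together with the facts about maximal roots recorded earlier in the section.
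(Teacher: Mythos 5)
Your proof is correct and follows essentially the same route as the paper's: \eqref{eq lengths of u_E and u_H} and \eqref{eq lengths of roots in G} are immediate from the definitions and the retraction structure, \eqref{eq lengths of roots as words1} comes from Proposition~\ref{lengths of roots} (for $E$ you substitute the elementary free-group fact that maximal roots are no longer than their powers, which is just the $C=1$ case of that proposition), and \eqref{eq lengths of roots as words2} comes from the shuffling estimate of Lemma~\ref{lem: dist}\eqref{eq dist WF}, which you re-derive rather than cite. One small slip worth noting: the claimed equality $|u'_E| = |u_E|/|k|$ is only valid when $u_E$ is cyclically reduced; in general, writing the reduced word $u_E = x c x^{-1}$ with $c$ cyclically reduced, one has $u'_E = x c' x^{-1}$ with $c = (c')^k$, so $|u'_E| = 2|x| + |c'|$ while $|u_E| = 2|x| + |k|\,|c'|$, giving the inequality $|u'_E| \leq |u_E|$ but not the stated equality. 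Since the inequality is all you use, the conclusion stands.
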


\begin{proof}
That \eqref{eq lengths of u_E and u_H} holds is immediate, and then       \eqref{eq lengths of roots as words1} follows from Proposition~\ref{lengths of roots}. For \eqref{eq lengths of roots as words2},    Lemma~\ref{lem: dist}\eqref{eq dist WF} and then Proposition~\ref{lengths of roots} give that for a suitable constant $C$, the bounds   $|u_F|   \leq C^n$  and then $|u'_F|   \leq C^n$ hold.       
The triangle inequality applied to $u_F = u u_E^{-1}$  gives \eqref{eq lengths of roots in G}: $|u_F|_G \leq |u|_G + |u_E|_G \leq |u|  + |u_E|  \leq 2n$.
\end{proof}

\section{Centralisers and conjugators in $G$} \label{s: centraliser and conjugators in G}

Our next lemma classifies the centralisers of elements in $G$.   Its statement (ditto Lemma~\ref{l: Centralizers last case} that follows) implicitly assumes that the centralisers of $u_E \neq 1$ in $E$,  $u_F \neq 1$ in $F$, and $u_H \neq 1$ in $H$ are cyclic, which is true by  Proposition~\ref{prop:centralizers in hyp} since these three groups are hyperbolic and torsion-free.  

\begin{lemma}\label{l: Centralizers in hyp case} Suppose $u \in G$. Assuming $u_E \neq 1$, $u_F \neq 1$, and $u_H \neq 1$ (respectively), let $u_E'$, $u_F'$ and $u_H'$ be generators for the centralisers of $u_E$ in $E$,  $u_F$ in $F$, and $u_H$ in $H$ (respectively).  Then --  

\renewcommand{\theenumi}{\alph{enumi}}
\renewcommand{\labelenumi}{\textup{(\alph{enumi}).}}

\begin{enumerate} 
\item  $Z_G(u) = G$ if $u=1$. \label{l hyp:0}
\item  $Z_G(u) = F \times \<u_E'\> $  if $u_H=1$, but $u \neq 1$.   \label{l hyp:a}
\item  $Z_G(u) =  \<u'_F\> \times E_0$ if $u_E =1$, but $u_H\neq 1$.   \label{l hyp:b}
\item $Z_G(u) =  \<u'_F\> \times \<u_E'\>$ if $\sigma(u) =0$, but $u_E \neq 1$ and $u_H\neq 1$. \label{l hyp:b2}
\item  $Z_G(u)$ is cyclic  if  $u_E \neq 1$, $u_H\neq 1$ and $\sigma(u) \neq 0$---indeed, in this case $\sigma(u'_H) \neq 0$  and  $\sigma(u'_E)  \neq 0$, and  if $p$ and $q$ are the integers such that 
 \begin{equation} \label{eq: lcm} 
p \sigma(u'_H) = q \sigma(u'_E) = \textup{lcm}\left(   \sigma(u'_H), \sigma(u'_E) \, \right)
\end{equation}
 and $f \in F$ is such that  $(u'_H)^p = f s^{p\sigma(u'_H)}$ in $H$, then  
 $z = f (u'_E)^q$ generates $Z_G(u)$.
  \label{l hyp:c}  
\end{enumerate} 

\end{lemma}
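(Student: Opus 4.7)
The plan is to dispose of each case by writing an arbitrary $w \in Z_G(u)$ as $w = w_F w_E$ under the decomposition $G = F \rtimes E$, and reading off constraints on $w_F$ and $w_E$ separately by projecting the commutation $wu = uw$ through the retractions $G \onto H$, $G \onto E$, and $G \onto \<s\>$, combined with the twist relation $x^{-1}fx = \phi^{\sigma(x)}(f)$ from \eqref{acting via sigma}. Case (a) is trivial. For case (b), the identity $u_H = u_F s^{\sigma(u_E)} = 1$ in $H = F \rtimes_\phi \Z$ forces $u_F = 1$ and $\sigma(u_E) = 0$, so $u = u_E$ lies in $E_0$ and commutes with $F$; retracting $wu = uw$ to $E$ then gives $w_E \in \<u_E'\>$ while $w_F$ is unconstrained, and since $\<u_E'\> \subset E_0$ commutes with $F$, the centraliser is the direct product $F \times \<u_E'\>$.

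Case (c) is the first that invokes atoroidality. Here $u = u_F \neq 1$, and a direct expansion using \eqref{acting via sigma} gives $wuw^{-1} = w_F \phi^{-\sigma(w_E)}(u_F) w_F^{-1}$, so the equation $wuw^{-1} = u_F$ forces $\phi^{-\sigma(w_E)}(u_F)$ to be conjugate to $u_F$ in $F$. Atoroidality then requires $\sigma(w_E) = 0$, so $w_E \in E_0$ commutes with $F$, and the leftover equation $w_F^{-1} u_F w_F = u_F$ forces $w_F \in \<u_F'\>$. Case (d) is similar but bypasses atoroidality: retracting $wu = uw$ to $E$ immediately yields $w_E \in \<u_E'\>$, and since $\sigma(u_E') = 0$ (as $\sigma(u_E) = 0$), $w_E \in E_0$ commutes with $F$; cancelling the commuting $E$-parts on the two sides of $wu = uw$ then forces $w_F \in \<u_F'\>$.

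The heart of the argument is case (e). First, $\sigma(u_H')$ and $\sigma(u_E')$ are both nonzero, since otherwise taking an appropriate power would force $\sigma(u) = \sigma(u_E) = 0$, contrary to hypothesis. With $p,q$, $L := p\sigma(u_H') = q\sigma(u_E')$, $f$, and $z$ defined as in the statement, I would verify $zu = uz$ by expanding $z = f(u_E')^q$ and shuffling letters past one another using \eqref{acting via sigma}; the residual $F$-identity that this reduces to, namely $u_F \phi^{-\sigma(u_E)}(f) = f \phi^{-L}(u_F)$, falls out of the known commutation $(u_H')^p u_H = u_H (u_H')^p$ in $H$ by substituting $u_H = u_F s^{\sigma(u_E)}$ and $(u_H')^p = f s^L$ and equating $F$-components. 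For the reverse inclusion $Z_G(u) \subseteq \<z\>$, given $w \in Z_G(u)$, retracting to $E$ yields $w_E = (u_E')^n$ and retracting to $H$ yields $w_H = (u_H')^m$ for some integers $m,n$; applying $\sigma$ to the identity $w_H = w_F s^{\sigma(w_E)}$ gives $m\sigma(u_H') = n\sigma(u_E')$, whose integer solutions form the rank-one subgroup of $\Z^2$ generated by $(p,q)$. Writing $(m,n) = k(p,q)$ and computing $z^k$ inductively as the element with $E$-part $(u_E')^{kq}$ and $F$-part $f \cdot \phi^{-L}(f) \cdots \phi^{-(k-1)L}(f)$, one matches $z^k$ term-by-term with $w$.

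The main obstacle I anticipate is the bookkeeping in case (e): both the verification $zu = uz$ and the identification $w = z^k$ demand careful manipulation of the $\phi$-action via $\sigma$. The conceptual key is that every computation in $G$ collapses through the retractions to a commutation in $F$, $E$, or $H$, controlled respectively by $u_F'$, $u_E'$, or $u_H'$.
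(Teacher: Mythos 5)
Your proof is correct and follows essentially the same route as the paper: project $wu = uw$ through $G\onto H$, $G\onto E$, and $\sigma$, exploit the semidirect decomposition $G = F\rtimes E$, and use that centralisers in the torsion-free hyperbolic pieces are cyclic. The only substantive variations are that in case (c) you invoke atoroidality of $\phi$ directly (via conjugacy-class preservation) where the paper passes instead through the maximal-root structure of $Z_H(u)$, and you carry out case (e) inline rather than deferring to the paper's auxiliary Lemma~\ref{l: Centralizers last case}; both changes are sound and close the argument.
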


 \begin{proof}
That Case~\eqref{l hyp:a} makes reference to $u'_E$ presumes that $u_E \neq 1$, which is so because it is implied by $u_H =1$ and $u \neq 1$.  Similarly, Case~\eqref{l hyp:b} can make reference to $u'_F$ because its hypotheses $u_E =1$ and $u_H \neq 1$  imply that $u_F \neq 1$.  And \eqref{l hyp:b2} can make reference to $u'_F$ because $u_H = u_F s^{\sigma (u)} \neq 1$ and $\sigma (u)=0$ imply $u_F \neq 1$.   

We will prove the five cases in turn.

  	 {\eqref{l hyp:0}} is immediate. 
  	
 	\eqref{l hyp:a}  Suppose $u_H =1$.  If $w = w_F w_E$ has  $w_E \in \<u_E'\>$, then $w_E u_E = u_E w_E$.  Further,   $\sigma(u) = 0$, and so  $\sigma(u'_E) = 0$ and $u'_E$ commutes with all elements of $F$.  So  $F \times \<u_E'\> \leq Z_G(u)$, the direct product being appropriate here because $u_E' \in E_0$.  And the reverse inclusion holds because if $w \in  Z_G(u)$, then $w_E u_E = u_E w_E$ and so $w_E \in \<u_E'\>$.   
 	
 	\eqref{l hyp:b} Suppose $u_E =1$ but $u_H \neq 1$.  Then $u = u_F \neq 1$ and so $\< u'_F \> \times E_0 \leq Z_G(u)$.  And if $w \in Z_G(u)$, then $w_H  \in Z_G(u)$ and so, because $H$ is hyperbolic, $w_H$ is a power of a maximal root $u'$ of $u$ in $H$.  But $\sigma(u)=0$, so $\sigma(u')=0$,  so  $\sigma(w_H)=0$, and so $\sigma(w)=0$.  So $w \in F \times E_0$, and   $w_F \in Z_F(u_F) =  \<u'_F\>$ because all elements of $E_0$ commute with all elements of $F$.    
 	
 	 	\eqref{l hyp:b2}    	 	A $w\in G$ commutes with $u$ if and only if $u_F u_E \ w_F w_E  = w_F w_E \ u_F u_E$  in   $G$.   But $\sigma(u) =0$ implies that $u_E \in E_0$ and so $u_E$ commutes with all elements of $F$, as does all roots of $u_E$ in $E$.  So  $w u=u w$ in $G$ if and only if $u_H w_H  =  w_H u_H$  in  $H$    and $u_E w_E  =  w_E u_E$   in $E$.  That  $Z_G(u) =  \<u'_F\> \times \<h_E'\>$ follows.

 	\eqref{l hyp:c}   We will argue that this follows from the special case of our next lemma where $v=u$ and $x_0 = w_0 =1$.  
 	
 	Let $p$ and $q$ be as per \eqref{eq: lcm}.  The lemma tells us that if $z \in G$ satisfies $z_H = (u'_H)^p$ in $H$ and $z_E =  (u'_E)^q$, then  $z \in Z_H(u)$. These conditions define $z$ because they necessitate that if  	 $f \in F$ is such that $z_H = f s^{\sigma(z_H)}$ then	 $z = f z_E$,  and \eqref{eq: lcm} implies that this $z$ satisfies   $z_H = (u'_H)^p$.  
 	
 		Moreover the lemma tells us that \emph{any}  $z' \in Z_H(u)$ has $z'_H = (u'_H)^{pr}$ in $H$ and $z'_E =  (u'_E)^{qr}$ for some $r \in \Z$. But
 		$(z^r)_H =(z_H)^r = ((u'_H)^p)^r = (u'_H)^{pr} = z'_H$ and likewise $(z^r)_E = z'_E$, and so $z' = z^r$.   
 	 \end{proof}

\begin{lemma}\label{l: Centralizers last case}   Suppose $u, v \in G$ and that  $u_E \neq 1$, $u_H\neq 1$ and $\sigma(u) \neq 0$.  
Let $u_E'$ and $u_H'$ be generators for the centralisers of $u_E$ in $E$ and $u_H$ in $H$ (respectively).  Suppose $x_0 \in E$ and $w_0 \in H$ are such that $u_E x_0 = x_0 v_E$ in $E$ and $u_H w_0 = w_0 v_H$ in $H$.   

Then $\sigma(u'_H)$ and   $\sigma(u'_E)$ are non-zero.  Further, $g \in G$ satisfies  $ug=gv$ in $G$ if and only if $g$ is represented by  a word  $w = w_H s^{-\sigma(w)} w_E$ such that $w_H = (u'_H)^p w_0$ in $H$ and $w_E =  (u'_E)^q x_0$ in $E$ and $p$ and $q$ are integers satisfying  
\begin{equation} \label{diophantine}
 p \, \sigma(u'_H) - q \, \sigma(u'_E) = \sigma(x_0) - \sigma(w_0). 
 \end{equation} 
\end{lemma}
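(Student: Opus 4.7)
My plan is to derive everything from a single structural equivalence relating conjugation in $G$ to simultaneous conjugation in its retracts $E$ and $H$. First, the nontriviality of $\sigma(u'_E)$ and $\sigma(u'_H)$ is immediate: since $u'_E$ generates $Z_E(u_E)$ and $u_E$ itself lies there, $u_E = (u'_E)^k$ for some $k \in \Z$, so applying $\sigma$ and using $\sigma(u_E) = \sigma(u) \neq 0$ forces $\sigma(u'_E) \neq 0$; the analogous argument with $u_H = (u'_H)^j$ and $\sigma(u_H) = \sigma(u) \neq 0$ handles $u'_H$.

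The heart of the lemma is the equivalence
\[ ug = gv \ \textup{in } G \ \Longleftrightarrow \ u_E g_E = g_E v_E \ \textup{in } E \ \textup{and} \ u_H g_H = g_H v_H \ \textup{in } H. \]
For the forward direction I will simply apply the retractions $G \onto E$ and $G \onto H$ from Section~\ref{s:G}. For the converse I will use $G = F \rtimes E$ together with \eqref{acting via sigma} to expand
\[ ug = u_F\, \phi^{-\sigma(u)}(g_F)\, u_E g_E, \qquad gv = g_F\, \phi^{-\sigma(g)}(v_F)\, g_E v_E, \]
whose $E$-parts recover the first projected equation. The remaining $F$-part equality $u_F \phi^{-\sigma(u)}(g_F) = g_F \phi^{-\sigma(g)}(v_F)$ will then be shown equivalent to $u_H g_H = g_H v_H$ by writing $u_H g_H = u_F s^{\sigma(u)} g_F s^{\sigma(g)}$ and $g_H v_H = g_F s^{\sigma(g)} v_F s^{\sigma(v)}$ and pushing each $s^{\sigma(\cdot)}$ past the intervening $F$-element via \eqref{acting via sigma}; this yields $u_F\phi^{-\sigma(u)}(g_F) s^{\sigma(u)+\sigma(g)} = g_F \phi^{-\sigma(g)}(v_F) s^{\sigma(g)+\sigma(v)}$, and the constraint $\sigma(u)=\sigma(v)$ is automatic once the $E$-projected equation is abelianised.

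With this equivalence in hand the rest of the proof is coset-and-$\sigma$ bookkeeping. The $E$-projected equation together with the hypothesis $u_Ex_0=x_0v_E$ forces $g_Ex_0^{-1} \in Z_E(u_E) = \langle u'_E \rangle$, so $g_E = (u'_E)^q x_0$ for some $q \in \Z$; symmetrically $g_H = (u'_H)^p w_0$ for some $p \in \Z$. Consistency of $\sigma$, namely $\sigma(g_E) = \sigma(g) = \sigma(g_H)$, then unfolds to $q\sigma(u'_E) + \sigma(x_0) = p\sigma(u'_H) + \sigma(w_0)$, which is \eqref{diophantine}. Conversely, given integers $p,q$ satisfying \eqref{diophantine}, I will form the word $w := w_H s^{-\sigma(w)} w_E$ where $w_H$ represents $(u'_H)^p w_0$ in $H$, $w_E$ represents $(u'_E)^q x_0$ in $E$, and $\sigma(w)$ is the common value $q\sigma(u'_E)+\sigma(x_0)=p\sigma(u'_H)+\sigma(w_0)$. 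Direct inspection shows that the retraction $G\onto H$ sends $w$ to $w_H$ (the $s^{\sigma(w)}$ produced by killing $t$ in $w_E$ cancels with $s^{-\sigma(w)}$) and the retraction $G\onto E$ sends $w$ to $w_E$ (similarly from killing the $a_i$ in $w_H$), so the equivalence established above gives $ug = gv$.

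The main obstacle is precisely the key equivalence: the $F$-versus-$H$ arithmetic in $G = F \rtimes E$ needs to be unpacked carefully, because the $F$-projected equation involves conjugates under $\phi^{\pm\sigma(\cdot)}$ that must be matched with the hyperbolic equation in $H$ up to the consistency condition $\sigma(u)=\sigma(v)$. Once that is done, the centraliser description and the Diophantine obstruction fall out as a routine exercise in tracking cosets of cyclic centralisers and keeping $\sigma$-values consistent.
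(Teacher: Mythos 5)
Your proof is correct and takes essentially the same approach as the paper: both reduce $ug=gv$ in $G$ to the pair of projected equations $u_Eg_E=g_Ev_E$ in $E$ and $u_Hg_H=g_Hv_H$ in $H$, then parametrize conjugators by cosets of the cyclic centralisers $\langle u'_E\rangle$ and $\langle u'_H\rangle$, with the Diophantine constraint \eqref{diophantine} arising from the requirement $\sigma(w_E)=\sigma(w_H)$. The only stylistic difference is in verifying the converse of the key equivalence — the paper does a single block computation using the fact that $F$ commutes with $E_0$, whereas you match $F$- and $E$-parts in the semidirect product $G=F\rtimes E$ and cross-reference the $F$-part equality with the $H$-projected equation — but this is a reorganisation of the same calculation, not a different route.
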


\begin{proof} The reason that $\sigma(u'_H)$ and   $\sigma(u'_E)$ are non-zero is that $\sigma(u_H) \neq 0$  is a multiple of each.

We claim that a word $w \in G$ satisfies $uw = wv$ in $G$ if and only if 
\begin{align} 
 u_H w_H & =  w_H v_H   \text{ in } H \text{ and }    \label{eq in H} \\ 
   u_E w_E & =  w_E v_E   \text{ in } E.  \label{eq in E}
\end{align}
The `only if' direction is apparent from equating the images of $uw$ and $wv$ under the map $G \onto H$ and then under $G \onto E$.  
For the `if' direction, we calculate that 
\begin{align} 
 u w & =  u_H s^{- \sigma(u)} u_E    w_H s^{- \sigma(w)} w_E        \label{eq uv=vw1} \\ 
     & =  u_H     w_H s^{- \sigma(w)} s^{- \sigma(u)} u_E w_E      \label{eq uv=vw2} \\
     & =  w_H     v_H s^{- \sigma(v)} s^{- \sigma(w)} w_E v_E      \label{eq uv=vw3} \\
     & =  w_H      s^{- \sigma(w)} w_E v_H s^{- \sigma(v)} v_E      \label{eq uv=vw4} \\
     & =  w v \label{eq uv=vw5}
\end{align}
because \eqref{eq uv=vw2} $w_H s^{- \sigma(w)} \in F$ and  $s^{- \sigma(u)} u_E \in E_0$ and so they commute,    \eqref{eq uv=vw3} by the hypotheses \eqref{eq in H} and \eqref{eq in E} hold and also by $\sigma(u) = \sigma(v)$, which follows from either, and \eqref{eq uv=vw4} $v_H s^{- \sigma(v)} \in F$ and  $s^{- \sigma(w)} w_E \in E_0$ and so they commute.

The result then follows because any $w$ satisfying these equivalent conditions must equal
$w = w_F w_E =  w_H s^{-\sigma(w_H)} w_E$ in $G$, and have        
$w_H = (u'_H)^p w_0$ in $H$ for some $p \in \Z$ and $w_E = (u'_E)^q x_0$ in $E$ for some $q \in \Z$. Further,  \eqref{diophantine} must hold because $\sigma(w_H) = \sigma(w_E)$.  
\end{proof}

The following lemma shows that $\CL_G(u,v)$ is at most a constant times  $(|u| + |v|)^2$ under the hypotheses of Case~\eqref{l hyp:c} of Lemma~\ref{l: Centralizers in hyp case}.  In fact, $\CL_G(u,v)$ is at most a constant times $|u| + |v|$  in Cases~\eqref{l hyp:0}--\eqref{l hyp:b2}, and so $\CL_G\mleft(n\mright) \preceq n^2$.  Most of the estimates establishing this can be found within our arguments in Section~\ref{s: CL in lambda}, but it is not a result we will need, so we do not include an explicit proof.

Like in Lemma~\ref{lem: dist},  $\Sigma\left(w\right)$ denotes the maximum of $\left|\sigma\left(w_0\right)\right|$ among all prefixes $w_0$ of $w$.   

\begin{lemma} \label{lem:best w}
There exists a constant $C_0>0$ such that  for all words $u$ and $v$ as per Lemma~\ref{l: Centralizers last case},  there exists a word $w$ such that $uw =wv$ in $G$ and 
\begin{align}
|w|_G & \leq C_0 n^2,  \label{eq wG quadratic} \\ 
\Sigma(w) & \leq  C_0 n,  	\label{eq Sigma linear} \\
|\sigma(w)| & \leq  C_0 n.   	\label{eq sigmaw linear}  
\end{align}
\end{lemma}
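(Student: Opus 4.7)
My plan is to invoke Lemma~\ref{l: Centralizers last case} to parametrize the set of conjugators for $u$ and $v$ in $G$ by integer solutions $(p,q)$ of the Diophantine equation~\eqref{diophantine}, and then to select a small solution using Corollary~\ref{cor Bezout lcm}.

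To set up the input data, I would first apply Corollary~\ref{cor:hyp H and E conjugators} to obtain $x_0 \in E$ and $w_0 \in H$ satisfying $u_E x_0 = x_0 v_E$ in $E$ and $u_H w_0 = w_0 v_H$ in $H$, with $|x_0|_E \le n$ and $|w_0|_H \le Cn$. Lemma~\ref{lem: length estimates for roots} then bounds $|u'_H|, |u'_E| \le Cn$, and hence $|\sigma(u'_H)|, |\sigma(u'_E)| \le Cn$ and $|\sigma(x_0)-\sigma(w_0)| \le (C{+}1)n$.

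The crucial quantitative input is that $u_H = (u'_H)^{k_1}$ in $H$ and $u_E = (u'_E)^{k_2}$ in $E$ for some non-zero integers $k_1, k_2$, so both $\sigma(u'_H)$ and $\sigma(u'_E)$ are non-zero divisors of the non-zero integer $\sigma(u_H) = \sigma(u)$. Hence $\mathrm{lcm}\bigl(|\sigma(u'_H)|,|\sigma(u'_E)|\bigr)$ divides $|\sigma(u)|$ and is therefore at most $|\sigma(u)| \le |u| \le n$. Applying Corollary~\ref{cor Bezout lcm} to~\eqref{diophantine} yields a solution $(p,q)$ with $|p\sigma(u'_H)| \le n$ and $|q\sigma(u'_E)| \le \max\{|\sigma(x_0)-\sigma(w_0)|,\, n\} \le (C{+}1)n$. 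Since $|\sigma(u'_H)|, |\sigma(u'_E)| \ge 1$, these in turn give $|p|, |q| \le (C{+}1)n$.

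I would then take $w$ to be the word obtained by concatenating $(u'_H)^p \cdot w_0 \cdot s^{-\sigma(w)} \cdot (u'_E)^q \cdot x_0$, where $\sigma(w) := p\sigma(u'_H)+\sigma(w_0) = q\sigma(u'_E)+\sigma(x_0)$ (the two expressions agree by~\eqref{diophantine}); Lemma~\ref{l: Centralizers last case} then guarantees $uw = wv$ in $G$. Piece-by-piece, $|w| \le |p|\,|u'_H|+|w_0|+|\sigma(w)|+|q|\,|u'_E|+|x_0|$, which is $O(n^2)$ and confirms~\eqref{eq wG quadratic}. The bound~\eqref{eq sigmaw linear} is immediate from $|\sigma(w)| \le |q\sigma(u'_E)|+|\sigma(x_0)| = O(n)$. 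For~\eqref{eq Sigma linear}, I would track $|\sigma|$ on prefixes of $w$ block-by-block: inside $(u'_H)^p$ the $\sigma$-value at each block boundary is an integer multiple $k\sigma(u'_H)$ with $|k\sigma(u'_H)|\le|p\sigma(u'_H)|\le n$, and it deviates by at most $|u'_H|\le Cn$ within any single $u'_H$-block; the subsequent $w_0$, $s^{-\sigma(w)}$, $(u'_E)^q$, and $x_0$ each contribute further deviations of order $n$ from $\sigma$-values that are themselves already $O(n)$. Summing these yields $\Sigma(w) \le C_0 n$.

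The main obstacle is obtaining the \emph{linear} bounds~\eqref{eq Sigma linear} and~\eqref{eq sigmaw linear}: a direct use of Lemma~\ref{lemma Bezout} would bound $|p|$ and $|q|$ only individually, so the products $p\sigma(u'_H)$ and $q\sigma(u'_E)$ that control the $\sigma$-shifts could a priori be of size $n^2$. The saving comes from the lcm version in Corollary~\ref{cor Bezout lcm} combined with the divisibility observation $\mathrm{lcm}(\sigma(u'_H),\sigma(u'_E)) \mid \sigma(u)$, which keeps these products linear in $n$ and so propagates through the block-by-block computation to a linear $\Sigma(w)$.
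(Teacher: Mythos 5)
Your proposal is correct and follows essentially the same path as the paper's proof: apply Corollary~\ref{cor:hyp H and E conjugators} and Lemma~\ref{lem: length estimates for roots} to bound $x_0$, $w_0$, $u'_H$, $u'_E$, observe that $\operatorname{lcm}(\sigma(u'_H),\sigma(u'_E))$ divides $\sigma(u)$ so Corollary~\ref{cor Bezout lcm} yields $|p\sigma(u'_H)|, |q\sigma(u'_E)| = O(n)$, and then read off the three bounds from the explicit form $w = (u'_H)^p w_0 s^{-\sigma(w)}(u'_E)^q x_0$ with a block-by-block check of $\sigma$ on prefixes. The only cosmetic difference is that you derive \eqref{eq sigmaw linear} directly from $q\sigma(u'_E)+\sigma(x_0)$ while the paper deduces it as a special case of \eqref{eq Sigma linear}; both are fine.
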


 \begin{proof} 
 
Let $C$ be the maximum of the constants of   Corollary~\ref{cor:hyp H and E conjugators} and Lemma~\ref{lem: length estimates for roots}.  Lemma~\ref{lem: length estimates for roots} applies to $u$ and, given that $|u'_E|_E = |u'_E|_G$ and $|u'_H|_H = |u'_H|_G$, tells us that
\begin{equation} \label{eq lengths of roots of uE and uH in G} 
\max \{ |u'_E|_E,   |u'_H|_H  \} \leq C  n. 
\end{equation}
 Per Corollary~\ref{cor:hyp H and E conjugators} let $x_0 \in E$ and $w_0 \in H$ be such that $u_E x_0 = x_0 v_E$ in $E$ and $u_H w_0 = w_0 v_H$ in $H$ and 
 \begin{equation} \label{eq x0 w0 bound} 
 |x_0|  \leq    n \ \text{ and }  \  |w_0| \leq  C n.   
 \end{equation}   
 
 If $w$ is \emph{any} conjugator as per Lemma~\ref{l: Centralizers last case}, then  
\begin{equation}  \label{eq w}
w = w_H s^{-\sigma(w)} w_E = (u'_H)^p w_0 s^{-\sigma(w)}  (u'_E)^q x_0  
\end{equation} 
so that  $w_H = (u'_H)^p w_0$ in $H$ and $w_E =  (u'_E)^q x_0$ in $E$, and  $p$ and $q$ are integers  
satisfying \eqref{diophantine}. 
And then, by the triangle inequality and then \eqref{eq lengths of roots of uE and uH in G} and \eqref{eq x0 w0 bound}, 
 \begin{align} 
|w|_G  & \leq |p| \, |u'_H|_H + |w_0|  + |\sigma(w)|  + |q| \, |u'_E|_E + |x_0| \label{eq gG bound2}  \\ 
       &\leq  C_1   n \, \max \set{|p|, |q|}   \label{eq gG bound3}   
\end{align}
for a suitable constant $C_1 > 0$.

 Now,  $u \sim v$ and so there exists such a conjugator $w$ and therefore there exist integers $p$ and $q$ satisfying \eqref{diophantine}.
 But then, because  $\sigma(u'_H)$ and   $\sigma(u'_E)$ are non-zero,  Corollary~\ref{cor Bezout lcm} applies  so as to tell us that there exist integers $p$ and $q$ satisfying  \eqref{diophantine}  such that 
\begin{align}
|p \sigma(u'_H)|  & \leq     \lcm \set{|\sigma(u'_H)|, |\sigma(u'_E)|} \ \text{ and}  \label{eq bound on p and q1} \\  
|q \sigma(u'_E)| & \leq    \max \set{ \lcm \set{|\sigma(u'_H)|, |\sigma(u'_E)|}, |\sigma(x_0) - \sigma(w_0)|}.  \label{eq bound on p and q2}
\end{align}

Both $\sigma(u'_H)$ and  $\sigma(u'_E)$ divide $\sigma(u)$, and so $ \lcm \set{|\sigma(u'_H)|, |\sigma(u'_E)|} \leq |\sigma(u)| \leq n$.    
And $|\sigma(x_0) - \sigma(w_0)| \leq|\sigma(x_0)| + |\sigma(w_0)| \leq |x_0|_G + |w_0|_G \leq (1 + C) n$, the last by \eqref{eq x0 w0 bound}.  So 
 \begin{equation} \label{eq bound on p and q by n}
\max \set{|p \sigma(u'_H)|, |q \sigma(u'_E)|}  \leq    (1 +C)n.
\end{equation}

  Because $\sigma(u'_H)$ and $\sigma(u'_E)$ are non-zero integers,  \eqref{eq bound on p and q by n} implies that 
  \begin{equation} \label{eq p and q}
  \max \set{|p|, |q|}  \leq (1+ C)n.
  \end{equation}
    And, for a suitable constant $C_0$,  the   conjugator $w$ associated to this $p$ and $q$ per \eqref{eq w} witnesses to the bound \eqref{eq wG quadratic} because of \eqref{eq gG bound3}  and  \eqref{eq p and q}.

    Finally, we will explain why this $w$  witnesses to the three bounds claimed in the lemma.  The bound \eqref{eq wG quadratic} holds because of   \eqref{eq gG bound3} and \eqref{eq p and q}.         For  \eqref{eq Sigma linear}, first  observe that any prefix $\pi$ of  $(u'_H)^p$ has $|\sigma(\pi)|$ at most a constant times $n$ because of \eqref{eq bound on p and q by n} and because  $|u'_H|$ is at most a constant times $n$  by Lemma~\ref{lem: length estimates for roots}. The same holds for any prefix of $(u'_E)^q$, likewise.
   
   So  \eqref{eq Sigma linear} holds because the remainder of $w = (u'_H)^p w_0 s^{-\sigma(w)}  (u'_E)^q x_0$---namely, $w_0$, $s^{-\sigma(w)}$, and $x_0$---has total length at most a constant times $n$.    
    
Finally, the bound \eqref{eq sigmaw linear} follows immediately from \eqref{eq Sigma linear}.
\end{proof}

\section{Why $\CL_{\Lambda}(n) \preceq 2^n$}  \label{s: CL in lambda}

We again recall the context: $\Lambda$ is the central extension of $G$ constructed as 
\begin{align*}
H & = \langle a_1, \ldots, a_m, s \mid s^{-1} a_i s = \phi(a_i)  \  \   \forall i \  \rangle \\ 
G &  =     \langle \, H, t  \mid [t,a_1]=  \cdots  = [t,a_m]=1   \rangle \\ 
\Lambda &  =     \langle \, H, t, \lambda  \mid [t,a_1]=  \cdots  = [t,a_m]=\lambda, \  \lambda \textup{ central }   \rangle.  
\end{align*}

A word in $a_1, \ldots, a_m, s, t$ can represent  an element $g \in G$ or $\o{g} \in \Lambda$, with $\o{g}$ being a particular lift of $g$.  Both contexts will arise in our proof and we will take care to distinguish them.  

A number of constants $C_i$ will arise in the following proof.  All will be independent of $n$, but they may depend on $\Lambda$, $m$, and our choices of generating sets.    

Suppose $\o{u}$ and $\o{v}$ are words on the generators of $\Lambda$ such that  $\o{u}  \sim \o{v}$ in $\Lambda$.  Let $n = |\o{u}|  + |\o{v}|$.    

Let $u$ and $v$ be the words obtained from $\o{u}$ and $\o{v}$, respectively, on deleting all letters $\lambda^{\pm 1}$.  Then $u \sim v$ in $G$ and $|u| + |v| \leq n$.  If  we write $L_u$ and $L_v$ for the exponent-sums of the letters $\lambda^{\pm 1}$ in $\o{u}$ and $\o{v}$, respectively, then  $\o{u} = u \lambda^{L_u}$ and $\o{v} = u \lambda^{L_v}$ in $\Lambda$ and
\begin{equation} \label{eq: Lu Lv upper bound} 
 |L_u| + |L_v|   \leq   n.   
  \end{equation}

Lemma~\ref{lem: length estimates for roots} applies to both $u$ and $v$ (when $u_E$, $u_F$, $u_H$, $v_E$, $v_F$, and $v_H$ are non-identity elements).  We will call on the estimates it gives in what follows.

Lemma~\ref{l: Centralizers in hyp case} sets out the structure of $Z_G(u)$ in five mutually exclusive cases.  We will examine those cases in turn and find suitable upper bounds for $\CL_{\Lambda}(\o{u},\o{v})$.

\emph{Case \eqref{l hyp:0}. $u =1$ in $G$, and so $Z_G(u) = G$.}  In this case $v=1$ in $G$ also and $\o{u}$ and $\o{v}$ are central in $\Lambda$, and so $\CL_{\Lambda}(\o{u},\o{v}) =0$.

\emph{Case \eqref{l hyp:a}. $u_H =1$ in $H$ and $u \neq 1$ in $G$, and so   $Z_G(u) = F \times \<u_E'\>$.}        
In this case $u=u_E$ and $v=v_E$ in $G$  and $\sigma(u) = \sigma(v)=0$.  And $|u_E| \leq |u|$ and $|v_E| \leq |v|$, and so $|u_E| + |v_E| \leq n$.   So, given that $\Dist_{\langle \lambda \rangle}^{\Lambda} \mleft(n\mright) \simeq 2^n$ (by Proposition~\ref{prop: dist exp}), for a suitable constant $C_1>0$, we have that in $\Lambda$,  
\begin{equation} \label{eq L'uL'v} 
u = u_E \lambda^{L'_u} \text{ and } v = v_E \lambda^{L'_v}    \text{ for some }  L'_u, L'_v \in \Z \text{ with } |L'_u|, |L'_v| \leq C_1^n.
\end{equation}         

 Now, $u_E \sim v_E$ in $E$, and so by Corollary~\ref{cor:hyp H and E conjugators} there  exists $x_0 \in E$ such that $u_E x_0 = x_0 v_E$ in $E$ and $|x_0|_E  \leq n$.  
Because  $Z_G(u)  = \set{ f \, (u'_E)^q \mid f \in F, q \in \Z }$,  the words $w$ such that $uw = wv$ in $G$ are those expressible as $w = f \, (u'_E)^q x_0$ for some $f \in F$ and $q \in \Z$.  For such a $w$, we have $u w = wv$ in $G$ and so $u w = wv \lambda^N$  in $\Lambda$ for some $N \in \Z$. 
Indeed, in $\Lambda$    
\begin{align} 
  u w  & =  u_E   f   \left(u'_E\right)^q x_0 \ \lambda^{L'_u} \label{eqb1} \\ 
 & =  f     \left(u'_E\right)^q   u_E  x_0 \ \lambda^{L'_u+L} \label{eqb2} \\ 
  & =  f     \left(u'_E\right)^q x_0  v_E   \ \lambda^{L'_u+L} \label{eqb3} \\ 
 & =  w v \lambda^{L'_u-L'_v+L},     \label{eqb4}
\end{align}
where for  \eqref{eqb1} and  \eqref{eqb4} we use  \eqref{eq L'uL'v}, for \eqref{eqb2} we use that    in $\Lambda$ 
\begin{equation} \label{eq fLambda} 
     u_E    f =  f u_E \lambda^L
  \end{equation}
  for some $L \in \Z$ and $u_E$ commutes with $u'_E$, and for \eqref{eqb3} we use that $u_e x_0 =x_0 v_E$ in $E$ and so in $\Lambda$.   

So, because   $\o{u} = u \lambda^{L_u}$ and $\o{v} = v \lambda^{L_v}$ in $\Lambda$,  we have  $   \o{u}  w = w \o{v}$ in $\Lambda$  if and only if  $L = L_v - L_u + L'_v - L'_u$.  Because $\o{u} \sim \o{v}$ in $\Lambda$, there exists $f \in F$ such that \eqref{eq fLambda} holds for this value of $L$.

For $i =1, \ldots, m$, we have $u_E   a_i = a_i u_E$ in $G$ because $\sigma(u_E) = \sigma(u)=0$.  
So  $u_E   a_i = a_i u_E \lambda^{n_i}$ in $\Lambda$ for a unique integer $n_i$. Let $\alpha_i$ be the exponent-sum of the letters $a_i$ in $f$.
Then from  \eqref{eq fLambda} we get that  
\begin{equation} \label{eq Bezout ni} 
L = \alpha_1 n_1 +  \cdots + \alpha_m n_m.
  \end{equation}
Because $|u_E| \leq C n$ by  \eqref{eq lengths of roots as words1}, Proposition~\ref{prop: dist exp} gives that for a suitable constant $C_2>1$, for all $i$,
\begin{equation} \label{eq  ni exp bound} 
|n_i| \leq C_2^n.  
  \end{equation}
  Now,   $|L| \leq C_3^n$  for a suitable constant $C_3 >1$ by  \eqref{eq: Lu Lv upper bound} and  \eqref{eq L'uL'v}.  Given our quantification of B\'ezout of Corollary~\ref{cor Bezout multi}, these estimates tell us that there exist $\alpha_1, \ldots, \alpha_m$ satisfying 
 \eqref{eq Bezout ni} with $|\alpha_i| \leq \max\{C_2^n, C_3^n\}$ for all $i$.  But then $f = a_1^{\alpha_1} \cdots a_m^{\alpha_m}$    satisfies \eqref{eq fLambda} and,    calling on  \eqref{eq x_0}, we get that  $w = f x_0$ is a word of length at most $C_4^n$ for a suitable constant $C_4>1$   such that  $\o{u}  w = w  \o{v}$ in $\Lambda$, as required.  (In the above analysis the value of $q$ in $w$ played no role, so we can choose it to be zero.)

\emph{Case \eqref{l hyp:b}. $u_E =1$ and $u_H  \neq 1$, and so $Z_G(u) =  \<u'_F\> \times E_0$.}        
In this case $u=u_F$ and $v=v_F$ in $G$ and $\sigma(u) = \sigma(v)=0$.  
By \eqref{eq lengths of roots as words2}, $\max \set{ |u_F|, |u'_F|,  |v_F|} \leq C_1^{n}$.   
Further, because  $|u|_G +  |v|_G  \leq n$ and, by \eqref{eq lengths of roots in G},  $\max \set{|u_F|_G, |v_F|_G}  \leq C_0 n$, and so there exists a constant $C_5 >1$ such that   $u = u_F \lambda^{L'_u}$ and  $v = v_F \lambda^{L'_v}$ in $\Lambda$ for some integers
\begin{equation} \label{eq: L'_u upper bound} 
 |L'_u| + |L'_v|   \leq  C_5^{n}.   
  \end{equation}

Now, $u \sim v$ in $H$ and so by Corollary~\ref{cor:hyp H and E conjugators} there exists a $w_0 \in H$ such that  $u w_0 =w_0 v$ in $H$ (and so in $G$) and, for a suitable constant $C_6>1$,
\begin{equation}  
|w_0|_G = |w_0|_H \leq C_6\left(|u|_H + |v|_H\right) \leq  C_6 n.   
  \end{equation}

Because $Z_G\left(u\right) =  \langle u'_F \rangle \times E_0$, the elements of $G$  that conjugate $u$ to $v$ in $G$ are those represented by the words $w =\left(u'_F\right)^p   x   w_0$ 
for $p \in \Z$ and $x \in E_0$.  We calculate in $\Lambda$ that for such $w$,      
\begin{align}
 u w  & =                 u_F      \left(u'_F\right)^p x w_0 \ \lambda^{L'_u } \label{eq: 3a} \\ 
 & =          \left(u'_F\right)^p  x u_F      w_0  \lambda^{L+L'_u} \label{eq: 3b} \\ 
  & =           \left(u'_F\right)^p  x       w_0 v_F  \lambda^{L+M+L'_u} \label{eq: 3c} \\ 
 & =                 w v    \lambda^{L+M+ L'_u- L'_v } \label{eq: 3d}  
\end{align}    
because \eqref{eq: 3a}   $u = u_F \lambda^{L'_u}$, \eqref{eq: 3b}  $u_F$ and $u'_F$ commute and   \begin{equation} \label{eq: L equals1}  u_F   x  = x u_F    \lambda^{L} \end{equation} for some $L \in \Z$,   \eqref{eq: 3c}    $u_F w_0=  w_0   v_F  \lambda^{M}$ for some $M \in \Z$, and \eqref{eq: 3d}   $v = v_F \lambda^{L'_v}$. Then, because   $\o{u} = u \lambda^{L_u}$ and $\o{v} = v \lambda^{L_v}$ in $\Lambda$,  for $\o{u}  w = w \o{v}$ in $\Lambda$, we must have  
 \begin{equation} \label{eq: L equals}
L = L_v - L_u  - M - L'_u + L'_v.  
\end{equation}

Now, $|M| \leq C_7^n$ for a suitable constant $C_7 >1$ by Proposition~\ref{prop: dist exp} given  \eqref{eq: w0G upper bound} and that   $\max \set{|u_F|_G, |v_F|_G}  \leq C_0 n$.  In light of this,    \eqref{eq: Lu Lv upper bound}, and \eqref{eq: L'_u upper bound}, the equation \eqref{eq: L equals} then tells us that for a suitable constant $C_8 >1$, 
 \begin{equation} \label{eq: L bound C8}
|L| \leq C_8^n.  
\end{equation}

Because $E_0$ is generated by $\set{s^{-i} t s^i \mid i \in \Z }$ we get:   

\emph{Observation.} The set $\mathcal{L}$  of integers $L$ such that $x^{-1}   u_F   x  = u_F    \lambda^{L}$ in $\Lambda$ for some $x \in E_0$ is the ideal $(\mathcal{L}) \leq \Z$ generated by the integers $\set{ L_i | i \in \Z}$ defined by  $$(s^{-i} t s^i)^{-1}   u_F   (s^{-i} t s^i)  = u_F    \lambda^{L_i}.$$

\emph{Claim.}  $\mathcal{L}$ equals the ideal $\mathcal{L}' = ( L_0, \ldots, L_{m-1} )$.  For $i=1, \ldots,  m$, let $\alpha_i$ be the exponent-sum of the letters $a_i$ in $u_F$.  Let $\Phi \in \textup{GL}_m(\Z)$ be the $m \times m$ matrix of the map (with respect to the basis $a_1$, \ldots, $a_m$) induced by $\phi: F \to F$ on abelianising $F$.  Then \begin{equation} \label{eq the exp sum calculation} L_i =  \begin{bmatrix}   1 & \cdots & 1       \end{bmatrix} \ \Phi^i  \begin{bmatrix}
           \alpha_{1} \\
           \vdots \\
           \alpha_{m}
         \end{bmatrix}. \end{equation}  
By the Cayley--Hamilton Theorem, $\Phi^m$ is a $\Z$-linear combination of  $\Phi^{m-1}, \ldots, \Phi, I$ and so  $L_i \in \mathcal{L}'$ for all $i \geq m$. Indeed, the constant term of the characteristic polynomial of $\Phi$  is $\textup{det}(\Phi) = \pm 1$.  So $\Phi$ can be expressed as a $\Z$-linear combination of  $\Phi^2, \ldots, \Phi^{m+1}$.     So  $L_i \in \mathcal{L}'$ for all $i <0$. Therefore   $\mathcal{L} = \mathcal{L}'$ as claimed.

Because $\o{u} \sim \o{v}$ in $\Lambda$, the $L$ of   \eqref{eq: L equals} is in $\mathcal{L}$ and there exists $x \in F$ such that \eqref{eq: L equals1} holds.

In the light of the above claim, we learn from Corollary~\ref{cor Bezout multi}  that there exist $n_0, \ldots, n_{m-1} \in \Z$ such that 
\begin{equation} \label{eq Lns}
L = n_0 L_0 + \cdots + n_{m-1} L_{m-1}
\end{equation}
 and  for all $i$, 
 \begin{equation} \label{eq nis}
 |n_i| \leq \max \{ |L_0|, \ldots, |L_{m-1}|, |L | \}. 
 \end{equation}
    But for suitable a constant  $C_9 >1$,    $|L_{j}| \leq  C_9^{|j|} |u_F|$ for all $j \in \Z$.  So, for the constant $C_{10}  =  C_9^{|m-1|}$,  $$\max \{ |L_0|, \ldots, |L_{m-1}| \} \leq C_{10} |u_F|$$    
and therefore, in light also of  the bounds on $|u_F|$ from  \eqref{eq lengths of roots as words2} and on $|L|$ from \eqref{eq: L bound C8},  for a suitable constant $C_{11} >1$, we have $|n_i| \leq C_{11}^n$ for $i = 0, \ldots, m-1$.       
     So  $$x =  (s^{-0} t^{n_0} s^0)  \ (s^{-1} t^{n_1} s^1) \  \cdots \ (s^{-(m-1)} t^{n_{m-1}} s^{m-1})  $$ 
is an element of  $E_0$ for which  $x^{-1}   u_F   x  = u_F    \lambda^{L}$ and $|x| \leq C_{12}^n$ for a suitable constant $C_{12} >0$.  And then  $w = x w_0$  satisfies $ \o{u}  w = w \o{v}$ and, given \eqref{eq: w0G upper bound}, we learn that $w$ is sufficiently short.

\emph{Case \eqref{l hyp:b2}. $\sigma(u) =0$,   $u_E \neq 1$, and $u_H  \neq 1$, and so $Z_G(u) =  \<u'_F\> \times \<u_E'\>$.}        

We have $u_H \sim v_H$ in $H$, and so like in Case~\eqref{l hyp:b} there exists  $w_0 \in H$ such that
\begin{equation} \label{eq w0}
  u_H w_0 =w_0 v_H \text{ in }  H \text{ and } |w_0|_G \leq C_6 n. 
  \end{equation}
Also   $u_E \sim v_E$ in $E$, and so like in   Case~\eqref{l hyp:a} there exists $x_0 \in E$ such that
\begin{equation} \label{eq x0}
 u_E x_0 = x_0 v_E \text{ in }  E \text{ and } |x_0| \leq n.   
\end{equation}

There exists $w \in G$ such that  $u w  =w  v$ in $G$, and therefore such that $u_E w_E = w_E v_E$ in $E$ and $u_H w_H =w_H v_H$ in $H$.  Now, $u_E$ is a power of $u'_E$ and  $\sigma(u_E) = \sigma(u) =0$, and so $\sigma(u'_E)=0$.  So $\sigma(x_0) = \sigma(w_E)$ because $w_E^{-1} x_0 \in Z_E(u_E) = \langle u'_E \rangle$.  For a similar reason $\sigma(w_0) = \sigma(w_H)$. 
But then $\sigma(x_0) = \sigma(w_0)$ because $\sigma(w) = \sigma(w_E) = \sigma(w_H)$.   

Next observe that $u_H = u_F$ and $v_H = v_F$ because  $\sigma(u) =0$.  So
$u_F w_H = u_H w_H =w_H v_H = w_H v_F$.  Then $u_F w_H = w_H v_F$ because $w_H = w_F s^{\sigma(w)}$ and $\sigma(x_0) = \sigma(w_0)$.

That $\sigma(x_0) = \sigma(w_0)$ implies that for $f \in F$ we have  $s^{-\sigma(w_0)} f s^{\sigma(w_0)} = x_0^{-1} f x_0$ in $G$. So, given that $w_0 =  (w_0)_F s^{\sigma(w_0)}$ and  $u_H w_0 =w_0 v_H$,    
\begin{equation} \label{eq s}
u_F  (w_0)_F x_0 = (w_0)_F x_0 u_F.
\end{equation}
 in $G$. Let $w_1$ be a minimal-length word that equals $(w_0)_F x_0$ in $G$.  Then in $G$,  
\begin{align}
   u w_1  & = u_F u_E  (w_0)_F x_0 \label{eq1}  \\
   & = u_F  (w_0)_F    u_E x_0  \label{eq2}  \\
& = u_F  (w_0)_F     x_0 v_E    \label{eq3} \\ 
& = (w_0)_F x_0 v_F        v_E    \label{eq4}  \\
& = w_1      v     \label{eq5} 
\end{align}
because \eqref{eq1} and \eqref{eq5} hold by definition,  \eqref{eq2} is a consquence of  $\sigma(u_E) =0$, which implies that $u_E$  and $(w_0)_F$ commute,    \eqref{eq3} is by \eqref{eq x0}, and \eqref{eq4} is by  \eqref{eq s}.

 By definition, $(w_1)_F = (w_0)_F$,  $(w_1)_E = x_0$, and $(w_0)_F = w_0 (w_0)^{-1}_E$.  So $(w_1)_F = w_0 (w_0)^{-1}_E$ and by the triangle inequality, $|(w_0)_F|_G \leq |w_0|_G + |(w_0)_E|_G$, which is at most $2|w_0|_G$.  So  
\begin{equation} \label{eq w1G length}
|w_1|_G \leq  |(w_0)_F|_G  + |x_0|    \leq  2|w_0|_G  + |x_0|  \leq (2C_6+1)n,  
\end{equation}
where the last inequality is by \eqref{eq w0} and \eqref{eq x0}.

Because $Z_G(u) =  \<u'_F\> \times \<u_E'\>$,
the words $w$ such that $uw = wv$ in $G$ are those expressible as $w =(u'_F)^p \, (u'_E)^q   w_1$ 
for some $p,q \in \Z$. 
Now, 
\begin{equation} \label{eq inG}
 u_E   u'_F   =  u'_F   u_E \ \text{ and } \ u_F u'_E =  u'_E u_F  \text{ in } G
\end{equation}
because $\sigma(u_E) = \sigma(u'_E)=0$.  So    $ u_E   u'_F   =  u'_F   u_E \lambda^{P}$  and $u_F u'_E =  u'_E u_F \lambda^{Q}$ for some integers $P$ and $Q$, which we use in \eqref{eq: 4b} and  \eqref{eq: 4e} of the following calculation.  In  $\Lambda$, for such $w$,        
\begin{align}
   u w  & =   u_F u_E      \left(u'_F\right)^p \left(u'_E\right)^q w_1   \lambda^{L'_u} \label{eq: 4a} \\ 
 & =  u_F   \left(u'_F\right)^p  u_E      \left(u'_E\right)^q w_1   \lambda^{L'_u + pP}   \label{eq: 4b}  \\
  & =  \left(u'_F\right)^p    u_F  u_E      \left(u'_E\right)^q w_1   \lambda^{L'_u+ pP}  \label{eq: 4c}   \\
  & =  \left(u'_F\right)^p    u_F   \left(u'_E\right)^q u_E     w_1   \lambda^{L'_u+ pP}  \label{eq: 4d}   \\
  & =  \left(u'_F\right)^p      \left(u'_E\right)^q u_F  u_E     w_1   \lambda^{L'_u+ pP +qQ}  \label{eq: 4e}   \\
  & = w v \lambda^{pP+qQ +N},     \label{eq: 4f} 
\end{align}    
where the remaining steps are explained by: \eqref{eq: 4a}   $u = u_F u_E \lambda^{L'_u}$ in $\Lambda$ for some integer $L'_u$,    \eqref{eq: 4c}    $u_F   \left(u'_F\right)^p$ freely equals  $\left(u'_F\right)^{p} u_F$,  \eqref{eq: 4d}   $u_E     (u'_E)^q$ freely equals $(u'_E)^qu_E$, 
  and  $u = u_F u_E \lambda^{L'_u}$, and \eqref{eq: 4f} $u = u_F u_E$ and   $u  w_1 =  w_1 v \lambda^N$ for some integer $N$.

Similarly to the earlier cases,  because   $\o{u} = u \lambda^{L_u}$ and $\o{v} = v \lambda^{L_v}$ in $\Lambda$, we have $\o{u}  w = w \o{v}$ in $\Lambda$  if and only if  
\begin{equation} \label{eq pq bezout}
L_v - L_u + pP +qQ+N =0,
\end{equation}
and because  $\o{u} \sim \o{v}$ in $\Lambda$,  there exist  $p, q \in \Z$ satisfying this equation.   
Now  we claim that $|P|, |Q|, |N + L_v - L_u| \leq C_{13}^n$ for a suitable constant $C_{13}>1$.  Indeed, Lemma~\ref{lem: dist} applied to $u_E   u'_F$,  which equals $u'_F   u_E \lambda^{P}$ in $\Lambda$, gives that $|P| \leq    |u'_F| C^{\Sigma\left( u_E   u'_F \right)}$. And so $|P|  \leq C_{14}^n$  for a suitable constant $C_{13} >1$, because of Lemma~\ref{lem: length estimates for roots} and because $\Sigma\left( u_E   u'_F \right) \leq |u_E|$.  
Likewise, applied to $u_F u'_E$, which equals $u'_E u_F \lambda^{Q}$ in $\Lambda$, it gives  $|Q| \leq    |u_F| C^{\Sigma\left( u_F u'_E \right)}$.  So $|Q|  \leq C_{13}^n$ because of Lemma~\ref{lem: length estimates for roots} and because $\Sigma\left( u_F u'_E \right) = \Sigma\left(u'_E \right) \leq |u'_E|$. And  Proposition~\ref{prop: dist exp} applied to  $v^{-1} w_1^{-1} u  w_1 =  \lambda^N$ gives that $|N|  \leq C_{13}^n$ because of $|u| + |v| \leq n$ and \eqref{eq w1G length}.  But then by adjusting the constant suitably we can get $|N + L_v - L_u| \leq C_{13}^n$  because $|L_u| + |L_v|   \leq   n$.

So  by the quantification of B\'ezout in Corollary~\ref{cor Bezout multi}, there exist  $p$ and $q$ satisfying \eqref{eq pq bezout} such that  $|p|, |q| \leq C_{13}^n$. And then $\o{u} w =w \o{v}$ in $\Lambda$ and the length of   $w =(u'_F)^p \, (u'_E)^q   w_1$ is $|p| |u'_F| + |q|  |u'_E| +   | w_1|$  is at most  $C_{14}^n$ for a suitable constant $C_{14} >1$, calling on Lemma~\ref{lem: length estimates for roots} for bounds on the remaining terms.

\emph{Case \eqref{l hyp:c}. $\sigma(u) \neq 0$, $u_E \neq 1$, and $u_H  \neq 1$.}  
 
Let $p$ and $q$ be the integers such that  
$p \sigma(u'_H) = q \sigma(u'_E) = \textup{lcm}\left(   \sigma(u'_H), \sigma(u'_E) \, \right)$.
Lemma~\ref{l: Centralizers in hyp case} tells us that $z = f (u'_E)^q$ generates $Z_G(u)$, where  $f$ is the element of $F$ such that  $(u'_H)^p = f s^{p\sigma(u'_H)}$ in $H$. In particular, $|p|$ and $|q|$ are both at most $n$ because $|\sigma(u'_H)|$ and $|\sigma(u'_E)|$ both divide $n$.  
 
 Applying the triangle inequality to $(u'_H)^p = f s^{p\sigma(u'_H)}$ and  $z = f (u'_E)^q$ gives that 
 for suitable a constant  $C_{15} >0$, 
 \begin{align} 
 |f|_G & \leq 2  |p| |u'_H|_G  \leq C_{15}   n^2, \  \text{  and}     \label{eq length of f} \\    
 |z|_G & \leq |f|_G + |q| |u'_E|_G   \leq C_{15}   n^2.    \label{eq length of z}
\end{align}
 
By Lemma~\ref{lem:best w},  there exists a word $w$ such that $uw =wv$ in $G$ and 
$|w|_G \leq C_0  n^2$  and $\Sigma(w)   \leq  C_0  n$. 
Then $uw =wv \lambda^N$ in $\Lambda$ for some $N \in \Z$ and, in similar situations in previous cases  we have called on Proposition~\ref{prop: dist exp}, which would give that $|N| \leq C_{16}^{n^2}$ for some constant $C_{16} >1$.  However,  we need a tighter bound and for that we look to Lemma~\ref{lem: dist}.  Let $W = v^{-1} w^{-1} u w$.   Observe that for any words $\tau$ and $\pi$, we have $\Sigma(\tau \pi) \leq \Sigma(\tau) + \Sigma(\pi)$ and $\Sigma(\tau) = \Sigma(\tau^{-1})$. So $\Sigma(W) \leq 2\Sigma(w) + |u| + |v| \leq (2C_0 +1) n$ and with Lemma~\ref{lem: dist} we get
\begin{equation} \label{eq last bound on N}
|N| \leq |W| C^{\Sigma\left(\widehat{W}\right)} \leq C_{17}^{n}
\end{equation}
for a suitable constant $C_{17} >1$.

The set of all $W$ such that $u W = W v$ in $G$ is   $\set{ z^r w \mid r \in \Z}$.   

Now, $u$ commutes with $z$ in $G$, so $u z = z u \lambda^M$ in $\Lambda$  for some $M \in \Z$.  
  
By hypothesis, $\o{u} \o{W} = \o{W} \o{v}$ in $\Lambda$ for some word $ \o{W} \in \Lambda$.
Let $W$ be this $\o{W}$ with all letters $\lambda$ deleted.  Then  $\o{u}  W = W \o{v}$ in $\Lambda$ also and $u  W  = W   v$ in $G$, and so this $W$ must equal $z^r w$ in $G$ for some $r \in \Z$.  We   calculate that in $\Lambda$,   
\begin{align}
\o{u} W & = uW \lambda^{L_u} \label{eq e1} \\ 
& = u z^rw   \lambda^{L_u+K} \label{eq e2} \\ 
& = z^r u w   \lambda^{L_u+K+rM} \label{eq e3} \\ 
& = z^r  w v   \lambda^{L_u+K+rM +N} \label{eq e4} \\
& = W v  \lambda^{L_u+rM +N} \label{eq e5} \\
& = W \o{v}  \lambda^{L_u - L_v+rM +N} \label{eq e6} 
\end{align}
because \eqref{eq e1} $\o{u} = u \lambda^{L_u}$, \eqref{eq e2} and \eqref{eq e5} $W = z^rw \lambda^K$ for some $K\in \Z$,  \eqref{eq e3} $uz=zu \lambda^M$,  \eqref{eq e4} $uw=wv \lambda^N$,  and  \eqref{eq e6}  $\o{v} = v \lambda^{L_v}$.
  So $L_v = L_u + rM + N$.

If $M=0$, then the above calculation gives that  $\o{u}  W = W \o{v}$ in $\Lambda$  for  $W = z^rw$, irrespective of the value of $r$.   So we can take $r=0$ and we get that $\o{u}  w = w \o{v}$ in $\Lambda$.

If $M \neq 0$, then   $r = (L_v - L_u -N)/M$ and because of $|L_u| + |L_v| \leq n$ and \eqref{eq last bound on N}, we get that $|r| \leq C_{18}^n$   for a suitable constant $C_{18} >0$.

In either case, $|W|_G \leq |r| \cdot |z|_G + |w|_G$ and because  of $|w|_G \leq C_0   n^2$ and  \eqref{eq length of z},   $|W|_G  \leq C_{19}^n$ for a suitable constant $C_{19}>0$.

This completes our proof that $\CL_{\Lambda}(n) \preceq 2^n$.

\section{Why $\CL_{\Lambda}(n) \succeq 2^n$} \label{s: CL Lambda at least exp} 

As we saw in Section~\ref{s: distortion}, if we define $f(n)$ to be the exponent sum of the letters in $\phi^n\mleft(a_i\mright)$, 
for a suitable choice of $i$, then   $f(n)  \simeq 2^n$ and $[t, s^{-n} a_i s^n]$ is a word of length $4n+4$ that equals $\lambda^{f(n)}$ in $\Lambda$.   Without loss of generality we may assume $i=1$.  So, if for $n \geq 1$ we define $\overline{u_n} =a_1 t \lambda^{f(n)}$ and $\overline{v_n} = a_1 t $, then $|\o{u_n}|_{\Lambda} + |\o{v_n}|_{\Lambda} \leq 4n + 8$ and $\o{u_n} \sim \o{v_n}$ in $\Lambda$ because $$\o{u_n} t^{f(n)} =   a_1 t^{f(n)+1}  \lambda^{f(n)} = t^{f(n)} a_1 t = t^{f(n)} \o{v_n}.$$

The images $u_n$ and $v_n$ in $G$ of $\o{u_n}$ and $\o{v_n}$, respectively, are $u_n = v_n = a_1 t$.
A shortest word $w$  such that $\o{u_n} w  = w \o{v_n}$  in $\Lambda$ will contain no letters $\lambda$, because $\lambda$ is central, and will satisfy $u_n w  = w v_n$ in $G$.  

Case~\eqref{l hyp:b2} of Lemma~\ref{l: Centralizers in hyp case} tells us that  $Z_G \left( a_1 t \right) =  \langle a_1 \rangle  \times \langle t \rangle$.  So, if $u_n w  = w v_n$  in $G$, then $w = a_1^p t^q$ for some $p, q \in \Z$.   

However not all such $w$ will satisfy $\o{u_n} w  = w \o{v_n}$  in $\Lambda$.  Indeed,  $\o{u_n} w = a_1 t \lambda^{f(n)} a_1^p t^q = a_1^{p+1} t^{q+1} \lambda^{f(n)+p}$ and   $w \o{v_n} = a_1^p t^q  a_1 t  = a_1^{p+1} t^{q+1}   \lambda^{q}$.  So the $w= a_1^{p} t^{q}$ such that $\o{u_n} w  = w \o{v_n}$  in $\Lambda$ and those of the form $w = a_1^{p} t^{p + f(n)}$ for some $p \in \Z$.

For such $w$,  
 \begin{equation} \label{eq: length at least p + } 
 |w|_{\Lambda} \geq |p + f(n)|
 \end{equation} 
  because killing all generators other than $t$ maps $\Lambda \onto \langle t \rangle \cong \Z$ and $w \mapsto t^{p + f(n)}$.  Now,  $\langle a_1 \rangle$ is an infinite cyclic subgroup of a hyperbolic group $H$ and as such is an undistorted subgroup of $H$. On account of this and that killing $t$ and $\lambda$ maps $\Lambda \onto H$, there exists a constant $C>0$ such that 
   \begin{equation} \label{eq: length at least Cp} 
  |w|_{\Lambda} \geq | a_1^p |_H \geq C|p|.  
  \end{equation}
  So, if $|p| < f(n)/2$ then  $|w|_{\Lambda} \geq f(n)/2$ by virtue of \eqref{eq: length at least p + }, and otherwise  $|w|_{\Lambda} \geq C f(n)/2$ by virtue of \eqref{eq: length at least Cp}.
  That $\CL_{\Lambda}\mleft(n\mright)   \succeq 2^n$ follows.  
  
  This completes our proof of Theorem $1'$.

\section{Fibre products: examples further up the Grzegorczyk hierarchy} \label{s:Fibre products}

We now turn our attention to a fibre product construction that
yields finitely presented groups displaying a wide range of conjugator length functions. Our purpose here is two-fold: first,
we want to construct specific finitely presented groups with 
large but computable conjugator length functions including representatives comparable to every level of the Grzegorczyk hierarchy of primitive recursive functions
 (Theorem \ref{t:big fellas}); secondly, 
we want to describe a framework (Remark~\ref{rem:frontier}) that holds the potential to provide 
calculations of $\CL\mleft(n\mright)$ for finitely presented groups in great generality.

\def\ssm{\smallsetminus}

The groups that we shall construct to prove Theorem \ref{t:big fellas} are
obtained by following the general template
for constructing ``designer groups" described in \cite{BridsonICM},
which is based on refinements 
of the Rips Construction \cite{rips} and the 1-2-3 Theorem 
of Baumslag, Bridson, Miller and Short \cite{BBMS2}. 
As is usual with this template,  we will have to craft input groups 
carefully to achieve the desired output group.

We remind the reader that
a group $G$ is said to be of {\em type ${\rm{F}}_3$} if it has a classifying
space $K(G,1)$ whose 3-skeleton is finite.

The Rips
Construction \cite{rips}
associates to any finite group-presentation $\mathcal{Q}$ a short exact sequence
$$
1\to N\to \G \overset{p}\to Q\to 1
$$
where $Q$ is the group presented by $\mathcal{Q}$, while
$N$ is a 2-generator group and $G$ is a torsion-free hyperbolic group
that satisfies a prescribed small-cancellation condition.
The  1-2-3~Theorem \cite{BBMS2}
implies that if $Q$ is of type ${\rm{F}}_3$ then the
{\em fibre product}
$$
P:= \set{(x,y)\in \G \times \G \mid p(x)=p(y) }
$$
is finitely presented.  
Moreover, it was proved in \cite{BHMS} that there is an algorithm that, 
given the presentation $\mathcal{Q}$ and a
set of $\Z Q$-module generators
for the second homotopy module $\pi_2\mathcal{Q}$ (or, equivalently, a combinatorial
model for the 3-skeleton of $K(Q,1)$),  will output a finite presentation for $P$; 
if the presentation is aspherical, then the algorithm simplifies considerably.

A primary goal of \cite{BBMS2} was to construct a finitely presented subgroup of a product of hyperbolic groups  
 such that   the membership and conjugacy problems for the subgroup were unsolvable.  
This was done by combining the Rips Construction and the 1-2-3 Theorem as described above: if $Q$
has an unsolvable word problem, then the fibre product $P<\G\times\G$ has the desired properties.  
This construction builds on an idea that originates in the work of Mihailova \cite{mihailova2}, who considered the
case where $\G$ is a free group. 
The following basic lemma from her work contains the key facts and the reader who is new to these ideas will find the exercise of 
proving it to be instructive.

\begin{lemma} [Mihailova] Let $Q = \langle a_1, \ldots, a_n \mid r_1, \ldots, r_m \rangle$, let $F$ be the
free group $F(a_1,\dots,a_n)$, and let
$$
P = \{ (u,v) \mid u=v \text{  in } Q\} < F\times F.
$$
Then,  $P$ is generated by $ \{(a_1, a_1), \ldots, (a_n,a_n), (r_1,1), \ldots, (r_m,1)\}$.
And for all $w\in F$,
\begin{enumerate}
\item $w = 1$ in $Q$ if and only if  $(w,1) \in P$, and 
\item provided $r \in F$ is not a proper power,     $(wrw^{-1},r)  \sim  (r,r)$ in $P$ if and only if $w=1$ in $Q$.
\end{enumerate}
\end{lemma}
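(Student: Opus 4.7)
The plan is to dispatch the three assertions in order; the first is a direct factorisation, the second is immediate, and the third is where the real content sits.

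For part (1), given $(u,v) \in P$ I would factor
\[
(u,v) \ = \ (v,v)\,(v^{-1}u,\, 1).
\]
The first factor is visibly a product of the $(a_i,a_i)^{\pm 1}$: read off the letters of $v$. The membership $(u,v) \in P$ tells us $u = v$ in $Q$, so $v^{-1}u$ lies in the kernel $N$ of $F \onto Q$, which is the normal closure of $\set{r_1,\ldots,r_m}$. Writing $v^{-1}u = \prod_k c_k r_{i_k}^{\epsilon_k} c_k^{-1}$ in $F$, the identity
\[
(c_k r_{i_k}^{\epsilon_k} c_k^{-1},\, 1) \ = \ (c_k,c_k)\,(r_{i_k},1)^{\epsilon_k}\,(c_k,c_k)^{-1}
\]
expresses $(v^{-1}u,1)$ in the listed generators. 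Part (2) is immediate from the definition of $P$: $(w,1) \in P$ says $w\cdot 1^{-1} = 1$ in $Q$.

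For part (3), the $\Leftarrow$ direction is a single line: if $w = 1$ in $Q$ then $(w,1) \in P$, and conjugating $(r,r)$ by $(w,1)$ yields $(wrw^{-1}, r)$. The substantive direction is $\Rightarrow$. Suppose $(x,y) \in P$ satisfies $(x,y)(r,r)(x,y)^{-1} = (wrw^{-1},r)$. Projecting onto each coordinate gives the equations $xrx^{-1} = wrw^{-1}$ and $yry^{-1} = r$ in $F$. Since $r$ is not a proper power in $F$, its centraliser in $F$ is cyclic, generated by $r$ itself (a standard fact about free groups). Hence $y = r^{k}$ and $w^{-1}x = r^{j}$ for some integers $j,k$, so $x = w r^{j}$. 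The membership $(x,y) \in P$ then reads $wr^{j} = r^{k}$ in $Q$; taking $r$ to be a relator (so $r = 1$ in $Q$, which is the setting in which Mihailova applies this) forces $w = 1$ in $Q$.

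The principal obstacle is the converse in (3): parts (1) and (2), and the easy direction of (3), are bookkeeping, but the hard direction hinges on the two inputs flagged above --- the cyclicity of centralisers of non-proper-powers in free groups, and the implicit assumption that $r \in N$ (without which $w=r$ would defeat the iff). Once these inputs are pinned down, what remains is the algebraic manipulation above.
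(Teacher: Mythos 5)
Your argument is correct and is the standard one. The paper gives no proof of this lemma---it explicitly leaves it as an instructive exercise---so there is no paper proof to compare against. The factorisation $(u,v) = (v,v)\,(v^{-1}u,1)$ with $v^{-1}u$ written via the normal closure of the relators handles generation, part (2) is definitional, and for part (3) the cyclicity of centralisers of non-proper-powers in free groups is exactly the right tool. Your flag on the forward direction of (3) is a genuine point: the lemma as printed does not state that $r=1$ in $Q$, yet your centraliser analysis only yields $w\in\langle r\rangle$ modulo the kernel, and $w=r$ is already a counterexample to the forward implication whenever $r\neq 1$ in $Q$. The intended reading, consistent with the paper's application where $r$ is a relator, is that $r$ lies in $\ker(F\onto Q)$; under that assumption your argument closes the implication.
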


By refining the proofs of such basic results,  one can establish 
quantified relationships between membership and conjugacy problems, on the one hand, and 
word problems, on the other:
 loosely speaking, the Dehn function  for $Q$ is reflected in the distortion function of 
$P<\G\times \G$,  
and these functions 
together with the geometry of cyclic subgroups in $Q$ account for  the difficulty of the conjugacy problem in $P$. 
These relationships are explored in detail in \cite{Bridson13} and \cite{Bridson14}. In particular,   the following close relationship  between the Dehn function of $Q$ and the conjugator length function of $P$ is established in \cite{Bridson13}, under the hypothesis that $Q$ has \emph{uniformly quasigeodesic cyclics} (UQC), meaning that there is a constant $\lambda >0$
such that $|q^n|_Q\ge \lambda |n|$ for all $n\in\Z$ and $q\in Q\ssm\{1\}$.

\begin{theorem}\label{r:input}\cite[Corollary~C]{Bridson13}
Let $P<\G\times \G$ be the fibre product associated
to an epimorphism $\G\to Q$ where $\G$ is a torsion-free hyperbolic group and
$Q$ is a finitely presented group that has uniformly quasigeodesic cyclics. Then  
$$\Dehn_Q\mleft(n\mright)\preceq \CL_P\mleft(n\mright)\preceq \Dehn_Q\mleft(n^2\mright).$$
\end{theorem}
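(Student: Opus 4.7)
The plan is to use a Mihailova-style encoding of the word problem of $Q$ to force long $P$-conjugators from below, and to reduce the $P$-conjugacy problem from above to Lysenok's theorem in the ambient hyperbolic group $\G\times\G$ followed by a centraliser correction whose $P$-length is controlled by $\Dehn_Q$.

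For the lower bound $\Dehn_Q(n)\preceq\CL_P(n)$, first I would fix a root-free element $r\in\G$ whose image $p(r)$ has infinite order in $Q$; such an $r$ exists because $\G$ is torsion-free hyperbolic and the Rips construction can be tailored to preserve a prescribed infinite-order element of $Q$. For each null-homotopic word $w$ in $Q$ of length $n$, the pair $u=(wrw^{-1},r)$ and $v=(r,r)$ lies in $P$ and is $P$-conjugate via $(w,1)\in P$. The next step is to enumerate the $\G\times\G$-conjugators: because $C_\G(r)=\langle r\rangle$, they all have the form $(wr^i,r^j)$, and the $P$-membership condition $p(w)p(r)^i=p(r)^j$ together with $p(r)$ having infinite order forces $i=j$. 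So every $P$-conjugator equals $(wr^i,r^i)$ for some $i\in\Z$. The UQG hypothesis on $Q$ is then used to rule out any significant shortening via the central factor $(r,r)^i$, pinning the shortest $P$-conjugator length essentially to $|(w,1)|_P$. A quantified Mihailova argument in the spirit of the lemma quoted above identifies $|(w,1)|_P$ with $\Dist^{\G\times\G}_P(n)$, which for this Rips-type fibre product grows like $\Dehn_Q(n)$, delivering $\Dehn_Q(n)\preceq\CL_P(n)$.

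For the upper bound $\CL_P(n)\preceq\Dehn_Q(n^2)$, suppose $u,v\in P$ with $|u|+|v|\leq n$ are $P$-conjugate. They are also $\G\times\G$-conjugate, so Lysenok (Proposition~\ref{Lysenok}) supplies a conjugator $w_0\in\G\times\G$ of length $O(n)$. Every $\G\times\G$-conjugator lies in the coset $C_{\G\times\G}(u)\cdot w_0$, which in the generic case $u_1,u_2\neq 1$ is $\langle u'_1\rangle\times\langle u'_2\rangle\cdot w_0$ by Proposition~\ref{prop:centralizers in hyp}. I would search for integers $i,j$ such that $((u'_1)^i,(u'_2)^j)\cdot w_0$ lies in $P$; this amounts to a single equation in $Q$ relating $p(u'_1)^i$, $p(u'_2)^j$, and the two coordinates of $p(w_0)$, guaranteed to be solvable by $P$-conjugacy of $u,v$, and the UQG hypothesis on $Q$ bounds its smallest solution polynomially in $n$. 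The resulting $P$-conjugator has $\G\times\G$-length polynomial in $n$; converting this to a $P$-generator length via the distortion estimate $\Dist^{\G\times\G}_P\simeq\Dehn_Q$ then yields $\Dehn_Q(n^2)$, with the $n^2$ absorbing the polynomial blow-up from the correction step.

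The main obstacle will be executing the polynomial bookkeeping cleanly enough that the exponent $2$ inside $\Dehn_Q(n^2)$ is honestly achieved rather than replaced by a larger power. This requires UQG both to keep the correction integers $i,j$ bounded by a polynomial in $n$ and to prevent the centraliser powers $(u'_1)^i,(u'_2)^j$ from blowing up in $\G$-length, together with a careful application of the distortion-to-Dehn-function translation at the correct argument. The degenerate centraliser cases (for instance $u_1=1$ or $u_2=1$) should reduce more directly to the distortion estimate, using Proposition~\ref{prop:centralizers in hyp} and torsion-freeness to keep centralisers of non-trivial elements cyclic; these cases are expected to be easier and not to affect the overall exponent.
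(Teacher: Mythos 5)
The paper does not prove Theorem~\ref{r:input}: it is quoted verbatim as \cite[Corollary~C]{Bridson13}, so there is no in-paper argument to compare your proposal against. The closest thing the paper gives is Remark~\ref{rem:frontier}, which records the structure of the cited proof: one establishes the more general chain $\Dehn_Q\mleft(n\mright)\preceq\CL_P\mleft(n\mright)\preceq\CL_P^{\G\times\G}\mleft(n\mright)\preceq\Dehn^c_Q\mleft(n\mright)$, where $\Dehn^c_Q$ is the rel-cyclics Dehn function, and then uses UQG to deduce $\Dehn^c_Q\mleft(n\mright)\preceq\Dehn_Q\mleft(n^2\mright)$: if $w=_Qu^{-p}$ with $|w|+|u|\le n$ and $u\ne 1$, then $|p|\lesssim |w|\lesssim n$ by UQG, so $|wu^p|\lesssim n^2$ and $\mathrm{Area}_Q(wu^p)\le\Dehn_Q(Cn^2)$.

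Your outline is consistent with that argument in its essentials --- Mihailova-style encoding for the lower bound, Lysenok in $\G\times\G$ plus a centraliser correction and the distortion $\Dist^{\G\times\G}_P\simeq\Dehn_Q$ for the upper bound --- but two points need repair. First, the theorem is stated for an \emph{arbitrary} epimorphism $\G\to Q$, so you cannot ``tailor the Rips construction'': instead, note that UQG forces every non-trivial element of $Q$ to have infinite order, so any maximal root $r$ in $\G$ of a preimage of a non-trivial $q\in Q$ is root-free with $p(r)$ of infinite order. Relatedly, in your lower bound the fact that the diagonal factors $(r^i,r^i)$ cannot produce significant shortening follows from $\G$'s hyperbolicity (uniform growth of $|r^i|_\G$), not from UQG on $Q$; UQG on $Q$ is used only in the upper bound. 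Second, the exponent $2$ needs to be earned: your phrase ``the $n^2$ absorbing the polynomial blow-up from the correction step'' is where the real work hides, and the cleanest route is exactly the $\Dehn^c_Q$ intermediary above --- bounding the correction integers $i,j$ by $O(n)$ via UQG, so that the corrected conjugator has $\G\times\G$-length $O(n^2)$, and only then applying $\Dist^{\G\times\G}_P\preceq\Dehn_Q$. Left implicit as you have it, a larger power of $n$ could easily creep in.
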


To identify groups $Q$ meeting the hypothesis of this theorem we look to: 
 
\begin{lemma}[see Section~4.1 of \cite{Bridson13}] \label{l:dot00} { \ } 
\begin{enumerate}
\vspace*{-2mm}
\item Torsion-free hyperbolic groups and \textup{CAT}$(0)$ groups have the UQC property. 
\item If  $Q$ is a finitely generated group with UQC, then any HNN extension of 
the form $Q\dot\ast_M$ has UQC.
\end{enumerate}
\end{lemma}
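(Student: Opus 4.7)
The plan is to handle the two parts by independent arguments, keeping UQG (linear growth of $|q^n|$, part (1)) distinct from UQC (uniform quasi-convexity of cyclic subgroups, part (2)). For part (1), in a torsion-free hyperbolic group every non-trivial element has cyclic centraliser by Proposition~\ref{prop:centralizers in hyp}, and passage to maximal roots via Proposition~\ref{lengths of roots} reduces UQG to a uniform positive lower bound on the stable translation lengths of infinite-order elements---a classical consequence of $\delta$-hyperbolicity. For a CAT(0) group $Q$ acting geometrically on a CAT(0) space $X$, every infinite-order element acts as a semisimple isometry with translation length bounded uniformly below, so $n \mapsto q^n x_0$ is a $(1,C)$-quasi-isometric embedding of $\Z$ into $X$ for a uniform constant $C$; the Milnor--\v{S}varc lemma transports the estimate to the word metric on $Q$.

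For part (2), let $G = Q\dot\ast_M$ and let $T$ be the associated Bass--Serre tree, on which $G$ acts with vertex stabilisers conjugate to $Q$ and edge stabilisers conjugate to $M$. A key structural fact is that every vertex stabiliser is uniformly quasi-convex in $G$, which follows from Britton's Lemma and the normal-form theorem for HNN extensions. I would classify each $g \in G \ssm \{1\}$ by its action on $T$.

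If $g$ is elliptic, it fixes a vertex of $T$, so $g = hqh^{-1}$ for some $q \in Q$ and $h \in G$. The $K_Q$-quasi-convexity of $\langle q \rangle$ in $Q$ given by the UQC hypothesis combines with the uniform quasi-convexity of $Q$ in $G$ to yield uniform quasi-convexity of $\langle q \rangle$ in $G$, and the conjugator $h$ is handled by replacing it with a minimal $T$-translation representative of the coset $hC_G(q)$, so that its geometric contribution is absorbed into uniform data. If $g$ is hyperbolic on $T$, its axis has integer translation length $\ell(g) \geq 1$, so $|g^n|_G \succeq n$, and any geodesic from $1$ to $g^n$ in the Cayley graph of $G$ projects to a path in $T$ that fellow-travels the axis; uniform quasi-convexity of the vertex groups met along the axis then transfers to uniform quasi-convexity of $\langle g \rangle$.

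The main obstacle is producing quasi-convexity constants that are truly uniform across all $g$, rather than depending on $g$ through conjugator length in the elliptic case or the size of pinches in a normal form for $g^n$ in the hyperbolic case. In the elliptic case this is controlled by the $T$-geometry of the conjugator and by uniform quasi-convexity of $Q$ in $G$. In the hyperbolic case, substantial cancellation between consecutive copies of $g$ in a Britton normal form for $g^n$ would amount to folding at the axis of $g$ in $T$, contradicting the fact that $g$ acts hyperbolically; so no such cancellation occurs and $\langle g \rangle$ sits along a well-controlled axis in the Cayley graph of $G$, completing the transfer.
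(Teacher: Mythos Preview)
The paper does not give its own proof of this lemma; it simply cites \cite{Bridson13}, Section~4.1. So there is no ``paper's approach'' to compare against, only the question of whether your argument establishes the statement.

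Your proposal rests on a misreading. In this paper, UQG and UQC are the \emph{same} property: both abbreviate ``uniformly quasigeodesic cyclics'', meaning there is a constant $\lambda>0$ with $|q^n|\ge \lambda|n|$ for every non-trivial $q$ and every $n$. (The paper's typography is admittedly inconsistent, writing UQG in one place and UQC in another.) You have instead read UQC as ``uniform quasi-convexity of cyclic subgroups'' and built the whole of part~(2) around transferring quasi-convexity through the Bass--Serre tree. That is a different property, and your argument, even if it could be made uniform, would not deliver the conclusion the lemma asserts.

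With the correct reading, part~(2) is much easier than what you attempted, and your own dichotomy gets you most of the way there. If $g\in Q\dot\ast_M$ is hyperbolic on the Bass--Serre tree, then $|g^n|$ is bounded below linearly in $n$ by the translation length on the tree, exactly as you say. If $g$ is elliptic, then $g$ is conjugate in $G$ to some $q\in Q\smallsetminus\{1\}$; now use the retraction $r:G\to Q$ that kills the stable letter. Since $r(g)$ is a $Q$-conjugate of $q$, it is non-trivial, and $|g^n|_G \ge |r(g)^n|_Q \ge \lambda|n|$ by the UQC hypothesis on $Q$. No control of conjugators, no quasi-convexity of vertex groups, and no pinch analysis is needed. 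Your outline for part~(1) is along standard lines and is fine.
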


(Here $Q\dot\ast_M$ denotes the trivial HNN-extension $\langle Q, t \mid [t,m] =1 \ \forall m \in M  \rangle$ of $Q$ along a subgroup $M \leq Q$.)

Our main interest here lies with large Dehn functions
associated to groups that have useful additional properties. We
would like to have concise presentations for them, as well as geometry to aid
our understanding.
  
We first consider  the function $\Delta\mleft(n\mright)$ defined 
recursively by $\Delta(1)=2$ and $\Delta(k+1)=2^{\Delta(k)}$. This 
function  
arises in several settings:  $\Delta(\lfloor   \log n\rfloor)$
is the Dehn function of Higman's group with no finite quotients \cite{higman}
$$
\langle a,b,c,d \mid a^{-1}ba=b^2,\, b^{-1}cb=b^2,\, c^{-1}dc=d^2,\,
d^{-1}ad=a^2 \rangle
$$
and of the Baumslag--Gersten one-relator group \cite{Baumslag2}
\begin{equation} \label{eq BG}
\langle x, t \mid (t^{-1}x^{-1}t)x(t^{-1}xt)=x^2 \rangle;	
\end{equation}
see \cite{Bridson15} and \cite{Platonov}.  Further, $\Delta\mleft(n\mright) = A_3\mleft(n\mright)$  the third \emph{Ackermann function}---see Section~\ref{intro}.  In fact,  all of the   Ackermann functions $n \mapsto A_k\mleft(n\mright)$ arise as Dehn functions of groups with small aspherical presentations. 
 These Ackermann functions play a central role in the work of
Dison and Riley on hydra groups \cite{DR}.
They prove that $A_k$ is the Dehn function of 
the HNN extension $Q_k = H_k\dot\ast_{L_k}$ where $H_k$ is the free-by-cyclic
 group $\<a_1,\dots,a_k,t  \mid t^{-1} a_1 t =a_1, \, t^{-1}a_i t = a_i a_{i-1} \ (i >1)  \>$  and $L_k$ is the free subgroup generated
$\{a_1t,\dots,a_kt\}$:
\begin{equation}  \label{e:hydra}
Q_k = \langle a_1,\dots,a_k,t, s \mid t^{-1} a_1 t =a_1, \, t^{-1}a_i t = a_i a_{i-1} \ (i >1); [s,a_it]=1 \  (i>0) \rangle. 
\end{equation}
\medskip

\begin{proof}[Proof of Theorem~\ref{t:big fellas}] We want finitely presented groups whose conjugator length functions  are comparable to fast-growing functions at all levels of the Grzegorczyk hierarchy.
Examples with conjugator length equivalent to $2n =A_1\mleft(n\mright)$ are easy to come by, and   in Theorem~\ref{t: exponential CL}
we constructed examples  growing like $2^n = A_2\mleft(n\mright)$. So we now 
assume that $k \geq 3$, and we want to exhibit
a finitely presented group $P_k$ such that $ A_k\mleft(n\mright)\preceq \CL_{P_k}\mleft(n\mright) \preceq A_k\mleft(n^2\mright)$.

Let $\Gamma_k$ be the group described above.  It is obtained from a finitely generated free group $F_k = F\mleft(a_1, \ldots, a_k\mright)$ by taking two HNN extensions,
each along finitely generated free subgroups. A standard topological argument
shows that the standard 2-complex of the natural presentation for any
such extension is aspherical, so the group is of type $\rm{F}_3$.
The Rips construction provides us with a torsion-free hyperbolic group $\G_k$
and an epimorphism $\G_k \onto Q_k$ with finitely generated kernel, and the
1-2-3 Theorem~\cite{BBMS2} tells us that the associated fibre product
$P_k < \G_k \times \G_k$ is finitely presented.

Moreover, $Q_k$ is of the form $H_k \dot\ast_{L_k}$, and it is shown in \cite{DR}  that $H_k$ is the fundamental
group of a compact non-positively curved space. Lemma~\ref{l:dot00} 
tells us that such groups have uniformly quasigeodesic cyclics, so 
Theorem~\ref{r:input} applies and we conclude that 
$ A_k\mleft(n\mright)\preceq \CL_{P_k}\mleft(n\mright) \preceq A_k(n^2)$.\end{proof}

\begin{remark} 
Recently, \cite{Gillis2} Gillis showed that the conjugator length function of the Baumslag--Gersten one-relator group \eqref{eq BG} is fast-growing.   He gave upper and lower bounds which differ but both take the form of logarithmic-height towers of exponential functions. 
\end{remark}

\begin{remark}\label{rem:frontier}  Theorem~\ref{r:input} has an antecedent in \cite{Bridson13}  which says that for $Q$ a finitely presented group, and $\G \onto Q$ an epimorphism from a torsion-free hyperbolic group, and $P < \G \times \G$ the associated fibre product, 
$$\Dehn_Q\mleft(n\mright) \preceq  \CL_P \mleft(n\mright) \preceq  \CL_P^{\G\times \G}\mleft(n\mright) \preceq \Dehn^c_Q\mleft(n\mright),$$ where
$\CL^{G\times G}_P \mleft(n\mright)$
 is quantified over $u,v$ with
$|u|_{G\times G}+|v|_{G\times G}\le n$ rather than 
$|u|_P+|v|_P\le n$ and
$\Dehn_Q^c\mleft(n\mright):\N\to \N$ is the {\em rel-cyclics Dehn function},
\begin{equation}
\Dehn^c\mleft(n\mright) : = \max_{w,u} \set{ {\rm{Area}}(w\,u^p) \,  \colon \, |w|+|u|\le n,\ w=_Qu^{-p},\
|p|< o(u)/2}.
\end{equation} 
This relationship motivates us to consider
which functions arise as rel-cyclic Dehn functions $\Dehn^c\mleft(n\mright)$
 of groups of type ${\rm{F}}_3$.
This question seems amenable to
attack using the many techniques developed to study Dehn functions
and subgroup distortion, and this expectation
lends weight to the conviction that the set of $\simeq$ classes of
conjugator length functions of finitely presented
groups is likely to be as diverse as the set of 
Dehn functions.\footnote{Indeed, since we expressed this conviction in the first version of this article, Gillis and Wagner [GiWa] have confirmed its truth with definitive results based on the technology of S-machines.}

\end{remark}

\bibliographystyle{alpha}
%\bibliography{bibli}
\bibliography{$HOME/Dropbox/Bibliographies/bibli}

\ni Martin R.\ Bridson, 
Mathematical Institute, Andrew Wiles Building, Oxford OX2~6GG, United Kingdom, {bridson@maths.ox.ac.uk},  
\href{https://people.maths.ox.ac.uk/~bridson/}{people.maths.ox.ac.uk/bridson/}

\ni  {Timothy R.\ Riley}, Department of Mathematics, 310 Malott Hall,  Cornell University, Ithaca, NY 14853, USA,  {tim.riley@math.cornell.edu}, \href{https://pi.math.cornell.edu/~riley/index.html}{math.cornell.edu/$\sim$riley/}

 \end{document}